\title[Distinguishing number of Urysohn metric spaces]{Distinguishing number of universal homogeneous Urysohn metric spaces}
\author[A. Bonato]{Anthony Bonato}
\thanks{The authors gratefully acknowledge support from NSERC}
\address{A. Bonato: Department of Mathematics, Ryerson University, 350 Victoria St., Toronto, ON, Canada, M5B 2K3}
\email {abonato@ryerson.ca}
\author[C. Laflamme] {Claude Laflamme}
\address{C. Laflamme: University of Calgary, Department of Mathematics and Statistics, Calgary, Alberta, Canada T2N 1N4}
\email {laflamme@ucalgary.ca}
\author[M. Pawliuk]{Micheal Pawliuk}
\address{M. Pawliuk: University of Toronto Mississauga, Mathematical and Computational Sciences, Mississauga, Ontario, Canada L5L 1C6}
\email {m.pawliuk@mail.utoronto.ca}
\author [N. Sauer]{Norbert Sauer}
\address{N. Sauer: University of Calgary, Department of Mathematics and Statistics, Calgary, Alberta, Canada T2N 1N4}
\email{nsauer@ucalgary.ca}
\newtheorem{defin}{Definition}[section]
\newtheorem{thm}[defin]{Theorem}
\newtheorem*{thm*}{Theorem}
\newtheorem{lem}[defin]{Lemma}
\newtheorem{coroll}[defin]{Corollary}
\newtheorem{fact}[defin]{Fact}
\newtheorem*{obs*}{Observation}
\newtheorem*{claim*}{Claim}
\newtheorem*{problem*}{Open Problem}
\newtheorem*{conj*}{Conjecture}
\newcommand{\Fra}{Fra\"{\i}ss\'e }
\newcommand{\Reals}{\mathbb R}
\newcommand{\SG}{\boldsymbol{G}} %structure
\newcommand{\ST}{\boldsymbol{S}} %structure
\newcommand{\U}{\boldsymbol{U}}
\newcommand{\US}{\boldsymbol{U}_{\negthinspace\mathcal{S}}}
\newcommand{\RS}{\boldsymbol{R}} % structure R
\newcommand{\MS}{\boldsymbol{M}} % metric space M
\newcommand{\NS}{\boldsymbol{N}} % metric space N
\newcommand{\AS}{\boldsymbol{A}} % metric space A
\newcommand{\BS}{\boldsymbol{B}} % metric space B
\newcommand{\CS}{\boldsymbol{C}} % metric space C
\newcommand{\DS}{\boldsymbol{D}} % metric space D
\newcommand{\TS}{\boldsymbol{T}} % metric space T
\newcommand{\dis}{D}
\newcommand{\SetS}{\mathcal{S}} % set S of reals
\newcommand{\spec}{\mathrm{spec}}
\newcommand{\dist}{d} %metric
\newcommand{\Aut}{\mathrm{Aut}}
\newcommand{\noi}{\noindent}
\date{\today}
\begin{document}

\keywords{Distinguishing number, asymmetric colouring number, graph, metric spaces, Urysohn metric spaces, relational structures, homogeneity, ultra-homogeneity}
\subjclass[2010]{0305E18, 05C15, O3E02, 03E05}

\begin{abstract}  
The distinguishing number of a structure is the smallest size of a partition of its elements so that only the trivial automorphism of the structure preserves each cell of the partition. We show that for any countable subset of the positive real numbers, the corresponding countable homogeneous Urysohn metric space, when it exists, has distinguishing number 2 or the distinguishing number is infinite. 

While it is known that a sufficiently large finite primitive structure has distinguishing number 2, unless its automorphism group is  the full symmetric group or alternating group, the infinite case is open and these countable Urysohn metric spaces provide further confirmation toward the conjecture that all primitive homogeneous countably infinite structures have distinguishing number 2 or else  the distinguishing number is infinite. 
\end{abstract}

\maketitle
%%%%%%%%%%%%%%%

\section{Introduction}

The \emph{asymmetric colouring number} of a graph was introduced by Babai long ago in \cite{B76}, and it resurfaced more recently as the \emph{distinguishing number} in  the work of Albertson and Collins in \cite{AC96}. First call a group of permutations $\SG$ on a set $A$ \emph{$k$-distinguishable} if there exists a partition of $A$ into $k$ cells such that only the identity permutation in $\SG$ fixes setwise all of the cells of the partition. It is evident that $\SG$ is always $|A|$-distinguishable. The least cardinal number $k$ such that $\SG$ is $k$-distinguishable is its \emph{distinguishing number}  $\dis(\SG)$. We call a graph or any structure $\ST$  $k$-distinguishable if its automorphism group $\Aut(\ST)$ satisfies  $\dis(\Aut(\ST)) \leq k$.

%The \emph{distinguishing number} can generally be defined for any structure $\ST$ as the smallest positive integer $d$ such that a partition of its elements in $d$ pieces exists so that only the trivial automorphism of $\ST$ preserves the partition, and is denoted by either $\dis(\ST)$. 

The distinguishing number is the amount of symmetry found within a structure, leading to interesting structural information that comes from the investigation of what is needed to break that symmetry. Of particular interest to us are countable homogeneous structures, carrying any two finite substructures of the same size and type into each other.  Their  automorphism group is thus highly symmetric and in particular transitive as a permutation group.
The set of rational numbers with its linear order relation is such a countable homogeneous structure, and is easily seen to have infinite distinguishing number. On the other hand, the Rado graph (or infinite random graph) is also homogeneous, but  Imrich, Klavzar and Trofimov showed in \cite{IKT07} that its distinguishing number is 2, which is the smallest it can be because the Rado graph is not rigid. The distinguishing number of various other finite and countable homogeneous structures was determined in \cite{bd1,bd2,LNS10}, including all simple and directed homogeneous graphs and posets.  In particular in all cases of infinite homogeneous simple and directed graphs, it was shown in \cite{LNS10} that their distinguishing number is either 2 or infinite, with only obvious exceptions having imprimitive automorphism groups. The following was thus conjectured.

\begin{conj*}[\cite{LNS10}]
 The distinguishing number of all primitive homogeneous countably infinite structures is 2 or infinite.
\end{conj*}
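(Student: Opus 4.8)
The plan is to establish the present case of the conjecture in the sharp form $D(\US)\in\{2,\infty\}$. Since $\US$ is homogeneous on more than one point it is not rigid, so $D(\US)\geq 2$; hence it suffices to prove that whenever $D(\US)$ is finite it equals $2$. I will use throughout that $\US$, when it exists, is the \Fra limit of the class of finite $\SetS$-metric spaces and therefore enjoys the one-point extension property. The value $D(\US)=\infty$ is to be pinned to the ``degenerate'' $\SetS$, recognizable from the arithmetic of $\SetS$, for which $\US$ carries an infinite set $I$ of mutual twins --- points that lie pairwise at one fixed distance and each at a common distance to every point outside $I$; the basic example is $\SetS\cap(0,2\min\SetS]=\{\min\SetS\}$, where $\US$ is a disjoint union of infinite cliques, the ultrametric-type configurations being analogous. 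In that situation every permutation of $I$ extends by the identity to an automorphism of $\US$, so any finite colouring repeats a colour on two points of $I$ and is then preserved by the transposition of those two points; thus $D(\US)=\infty$.

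For the remaining, non-degenerate $\SetS$ the target is $D(\US)=2$, and the core is the case where $\Aut(\US)$ is primitive. Fix an enumeration $\US=\{u_n:n\in\omega\}$ and build a $2$-colouring $c\colon\US\to\{0,1\}$ in stages, constructing alongside it a sequence of distinct ``landmarks'' $p_0,p_1,\dots$ with $\{p_n:n\in\omega\}=\US$. At stage $k$ one lets $p_k$ be the first point not yet used and then, using the one-point extension property, attaches to $p_k$ a finite colour-and-distance gadget so arranged that $p_k$ becomes the \emph{unique} point of $\US$ standing in a prescribed metric-plus-colour relationship to $p_0,\dots,p_{k-1}$, while a bookkeeping argument prevents any such gadget from being reproduced elsewhere as $c$ is completed. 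Then any colour-preserving $\phi\in\Aut(\US)$ must fix $p_0$, since it is definable from $c$, hence fix $p_1$, and so on, hence fix all of $\US$; so $\phi=\mathrm{id}$ and $D(\US)=2$. Imprimitive non-degenerate $\SetS$ do occur --- for instance $\SetS=\{1,2,5\}$, where $\US$ is a disjoint union of countably many Rado graphs at pairwise distance $5$ and yet has no infinite set of mutual twins --- and are reduced to the primitive case by relativization: each congruence block is again a Urysohn space with no infinite twin set, so it carries a rigid $2$-colouring; choosing these colourings pairwise non-isomorphic on distinct blocks and working up through the congruences removes the residual symmetry.

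The step I expect to be the principal obstacle is the definability of the landmarks against the homogeneity of $\US$: precisely because $\US$ is ultrahomogeneous, no finite \emph{uncoloured} configuration is ever unique, so the gadgets --- and the order in which points receive colours --- must be chosen so that each $p_k$ genuinely is singled out by a finitary colour-condition relative to the earlier landmarks. This is where primitivity is used, and where the combinatorics of $\SetS$ enters, since $\SetS$ determines which finite coloured $\SetS$-metric configurations are realizable and how rigid they can be made; the most delicate part is carrying this out uniformly over all admissible $\SetS$, in particular over finite $\SetS$ and $\SetS$ of complicated order type, where the available distances --- hence the usable gadgets --- are scarcest.
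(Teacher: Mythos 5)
First, be aware that the statement you were given is an open conjecture quoted from \cite{LNS10}: the paper offers no proof of it and only verifies the special case of the Urysohn spaces $\US$ (the Main Theorem) as supporting evidence. Your proposal likewise addresses only $\US$, so even if complete it would not establish the statement; I therefore assess it against the Main Theorem, and there it contains a genuine error. You pin the value $\dis(\US)=\omega$ to the existence of an infinite set of ``mutual twins'' and obtain the partition-preserving automorphism as a transposition of two same-coloured twins. But whenever $\SetS$ contains two positive numbers $a<d$, the isosceles triangle with sides $d,a,d$ lies in $\mathfrak{A}(\SetS)$, so by universality every pair $x,y$ at distance $d$ admits a point $z$ with $\dist(x,z)=a\not=d=\dist(y,z)$; hence two points can be twins only if their distance is $\min\SetS_{>0}$. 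In case (2) of the Main Theorem ($0$ a limit of $\SetS$, no positive limit) that minimum does not exist, so $\US$ has no twins at all, and yet $\dis(\US)=\omega$ whenever $\mathrm{gap}(\SetS_{\geq s})>0$ for every positive $s\in\SetS$ (Lemmas \ref{lem:inflim0} and \ref{lem:infd0}). The witnessing automorphism is not a transposition: it is an involution moving \emph{every} point of an entire $\stackrel{s}{\sim}$-class --- itself a copy of $\U_{\SetS_{\leq s}}$ carrying many distances --- by a fixed small amount while preserving the partition (Lemma \ref{lem:denssubhom}), and building it is a back-and-forth construction your sketch does not contain. So your proposed dichotomy misclassifies an entire branch of the theorem.

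On the positive side ($\dis(\US)=2$), the step you yourself flag as ``the principal obstacle'' --- arranging that each landmark is uniquely determined by a finite colour-and-metric condition while ``a bookkeeping argument prevents any such gadget from being reproduced elsewhere'' --- is the entire content of the theorem, and you supply no mechanism for it. The paper's mechanism is to make the distinguished colour class rigid as a metric space: its $s$-distance graph is forced to be a rigid forest whose trees have pairwise distinct isomorphism types (Lemma \ref{lem:unbninsin}), or, when $\SetS$ accumulates only from above at a limit not in $\SetS$, a crab nest with strictly increasing hefts (Lemmas \ref{lem:crabrig}--\ref{lem:poslimno2}); keeping newly added points far from the old ones requires the controlled amalgamations of Lemmas \ref{lem:amalbetter} and \ref{lem:oplusamalg}, and the case analysis on the arithmetic of $\SetS$ is where the real work lies. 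Finally, primitivity is not where the difficulty sits and plays no role in the paper's argument: as the Conclusion notes, $\US$ is imprimitive whenever $\SetS$ has a jump number, yet the theorem is proved uniformly without reducing to primitive congruence blocks; your proposed reduction (pairwise non-isomorphic rigid $2$-colourings on infinitely many blocks, then ``working up through the congruences'') is both unsubstantiated and unnecessary.
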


The conjecture is very much in the spirit of the finite case, where Cameron, Neumann and Saxl proved (see \cite{BC11})  that a sufficiently large, finite primitive permutation group has distinguishing number 2, unless it is the full symmetric group or alternating group. The 43 exceptions were determined by Seress, and one of these exceptions is the dihedral group $D_{10}$, which is primitive and the automorphism group of the homogeneous graph $C_5$; it has distinguishing number 3 and hence  the necessity for the conjecture to address only infinite structures. A tool developed in \cite{LNS10} appears in the right direction to confirm the conjecture, namely, that of a fixing type for the action of a group $G$ on a set $A$. If the action does have such a fixing type, then the distinguishing number of $G$ acting on $A$ is 2. It may be possible that a more general result for all primitive groups exists (homogeneous or not), but we have no insight in that direction. 

A graph having distinguishing number 2 has an interesting translation to permutation group theoretic properties of its automorphism group, see Section 2.2 of \cite{BC11}. Hence it is reasonable to hope that the 2-distinguishability of the rational Urysohn space, and the distinguishing numbers of other Urysohn spaces addressed below, have interesting translations into properties of their automorphism groups.

%Of particular interest is that such extremely amenable groups are exactly those automorphism groups of Fra\"iss\'e limits (hence, homogeneous structures) of a Fra\"iss\'e ordered class with the Ramsey property. See \cite{f} for background on Fra\"iss\'e limits.

In this paper, we consider the case of homogeneous countable \emph{Urysohn metric spaces} $\US$ for a given countable spectrum $S \subseteq \Reals +$, constructed as the Fra\"iss\'e limits of all finite metric spaces whose spectrum is a subset of $S$. 
Note that not every subset $S$ can be the spectrum of such a Urysohn metric space, and a necessary and sufficient condition is known as the ``4-values'' condition, which is precisely when metric triangles amalgamate; when this is the case we call $S$ a \emph{universal spectrum}. Depending on $S$, we will see that the automorphism group of $\US$ may or may not be primitive; we do not have a characterization for a spectrum to yield a primitive automorphism group of its corresponding Urysohn space.

These countable metric spaces are very much related to the well known (uncountable) Urysohn space,  the complete separable metric space which is both homogeneous and universal; it is the completion of the countable homogeneous Urysohn spaces using the rationals as spectrum. See, for example,  \cite{cam,mell}. 

The main result of the paper is as follows.

\begin{thm*}[Main Theorem]\label{thm:mainClassification}
Let $\SetS \subseteq \Reals +$ be a countable universal spectrum and $\US$ the countable homogeneous structure with spectrum $S$. We then have that  $\dis(\US)=2 \mbox{ or } \omega$, and the following items hold.
\begin{enumerate}
\item If $\SetS$ has a positive limit (not necessarily in $\SetS$), then $\dis(\US)=2$.
\item If $\SetS$ has no positive limits but has 0 as a limit, then $\dis(\US) = 2$ if and only if $S$ contains arbitrarily large elements of arbitrarily small distance.
\item If $\SetS$ does not have a limit,  then $\dis(\US)=2$ if and only if $S$ contains two elements of distance smaller than or equal to the minimum positive element of $\SetS$.
\end{enumerate}
\end{thm*}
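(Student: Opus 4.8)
\medskip
\noindent\textbf{Proof strategy.} Since $\US$ is infinite and homogeneous, $\Aut(\US)$ is transitive, hence non-trivial, so $\dis(\US)\ge 2$ always; the value $2$ or $\omega$ will therefore follow once we show $\dis(\US)\le 2$ in the positive sub-cases of (1)--(3) and $\dis(\US)=\omega$ in the negative ones, these being exhaustive. For an upper bound of $2$ it is enough to exhibit one $R\subseteq\US$ with trivial setwise stabiliser in $\Aut(\US)$ --- equivalently, a fixing type for the action of $\Aut(\US)$ on $\US$ in the sense of \cite{LNS10} --- since then $\{R,\,\US\setminus R\}$ is distinguishing. I would build $R$ by recursion along an enumeration of $\US$, using the one-point extension property of the \Fra limit, according to the usual template: keep inside $R$ a growing finite ``rod'' $W_n$ whose mutual distances are arranged in a path-like (``metric ray'') pattern so that $W_n$ has no non-trivial self-isometry, and colour near $W_n$ so that every colour-preserving automorphism must carry $W=\bigcup_n W_n$ onto the rod and hence fix it pointwise; at stage $n$ adjoin to the rod a new point \emph{separating} the $n$-th pair $\{u,v\}$ of $\US$, i.e.\ sitting at different distances from $u$ and from $v$. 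In the limit $W$ separates all points, so a colour-preserving automorphism is the identity.

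\medskip
\noindent What differs among the three positive cases is only the stock of elements of $S$ available for the rod and for the separating points, together with the verification that the one-point extensions the recursion calls for are metrically admissible over \emph{every} finite configuration that can occur. Separating a pair at distance $D=d(u,v)$ needs two elements of $S$ within $D$ of one another (the two legs of the separating point); for $D$ minimal this is exactly the closeness clause hypothesised in (3), it is automatic near a positive limit $\ell$ (every neighbourhood of $\ell$ meets $S$ infinitely) and near $0$ when $0$ is a limit (consecutive gaps of a sequence tending to $0$ tend to $0$), but for $D$ large one moreover needs two \emph{large} elements of $S$ within $D$ of each other, which is precisely the extra clause in the hypothesis of (2) --- without it the rod stays at bounded scale and cannot pin down far-apart pairs, giving the ``only if'' of (2). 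The recurring technical point in all three cases is that over an arbitrary finite partial configuration the interval of admissible new distances must still contain the desired elements of $S$, both for the rod's attaching distances and for the separating legs; checking this uniformly, which is where the hypothesis on $S$ is actually consumed, is the bulk of the upper-bound argument.

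\medskip
\noindent For the lower bounds, the theme is that failure of the hypothesis makes $S$ so evenly spread that a distance threshold becomes a congruence on $\US$. In item (3) with the hypothesis failing, any two distinct elements of $S$ differ by more than $\min S$, so $d(x,y)=d(y,z)=\min S$ forces $d(x,z)=\min S$: ``distance $\min S$'' is a congruence with infinite classes, and since the same spread-out-ness makes the distance from a class to an outside point constant, the full symmetric group of each class embeds in $\Aut(\US)$; then any finite colouring, having two equally coloured points in one class, is fixed by their transposition, so $\dis(\US)=\omega$. In item (2) with the hypothesis failing the large part of $S$ is uniformly discrete, and one argues --- here crucially using the $4$-value condition --- that this forces enough regularity on $\US$ (in the cleanest case, a multiplicative gap in $S$, hence again an imprimitivity system with infinite blocks) to produce, for any finite colouring, a non-trivial colour-preserving automorphism. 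Isolating precisely why failure of the closeness clause renders $\US$ too symmetric to be broken with finitely many colours --- that is, carrying out this last structural analysis --- is the subtle step and, I expect, the hardest part of the proof.
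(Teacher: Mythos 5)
Your overall template is the same as the paper's: build, by recursion along an enumeration of $\US$ and using homogeneity, a subspace $M$ that is rigid and separates every pair of outside points (some $z\in M$ with $\dist(x,z)\neq\dist(y,z)$), and take the $2$-colouring $\{M,\US\setminus M\}$; and for item (3) your lower bound (the minimum positive element $p$ is a jump number, so distance $\leq p$ is a congruence with infinite classes whose points are metrically indistinguishable from outside, whence a colour-preserving transposition exists) is exactly the paper's Lemma~\ref{lem:nolim}. But there are two genuine gaps, both at the places you yourself flag as ``the bulk'' and ``the hardest part.'' First, the ``rod / metric ray'' device does not survive the hardest sub-case of (1): when $\SetS$ has a unique positive limit $r\notin\SetS$ approached only from above and every element below $r$ is insular, every $s<r$ induces an equivalence relation and the only controllable amalgamation is the $\oplus$-amalgamation, which collapses sums down to the nearest element of $\SetS$ and can force new cross-distances to equal exactly the distinguished value $s$. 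One therefore cannot keep a single growing path-like configuration rigid, nor even keep the $s$-distance graph a forest of prescribed trees; this is precisely why the paper replaces the rigid forest by the ``crab nest'' (Lemmas~\ref{lem:crabrig}, \ref{lem:spidwrig}, \ref{lem:poslimno2}), whose cliques of strictly increasing heft and controlled extra edges absorb the unavoidable spurious $s$-edges. Relatedly, rigidity of the infinite union $W=\bigcup_nW_n$ does not follow from rigidity of each finite stage; the paper secures it by making the components pairwise non-embeddable (distinct trees, or crabs with $\mathrm{heft}(H_i)+2<\mathrm{heft}(H_{i+1})$) and keeping all cross-component distances above $s$, a point your sketch omits.

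The second gap is the lower bound in item (2), which you explicitly defer. Failure of the hypothesis there does not reduce to a single imprimitivity system: one must show that $0$ is a limit of jump numbers (Lemma~\ref{lem:lim0bl}), that each class $E$ of the relation $\stackrel{s}{\sim}$ is itself a homogeneous Urysohn space, choose $s$ so small that every colour class meeting $E$ is \emph{dense} in $E$ and so that $s$ is below the gap of all distances from $E$ to outside points, and then run a back-and-forth construction of a partition-preserving \emph{involution} of $E$ moving every point by exactly $s$ (Lemma~\ref{lem:denssubhom}); only then does Fact~\ref{fact:gap} let you extend by the identity off $E$. Without this density-plus-involution argument the ``too symmetric to break with finitely many colours'' claim is not established. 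In short: right architecture, correct item (3), but the two steps you postpone are exactly where the paper's new machinery (controlled $\oplus$-amalgamation, crab nests, dense-class involutions) is doing the work, and your proposal as written would fail there.
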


The proof is the result of analyzing the existence of various limit structures for the set $\SetS$ along the following lines, and the necessary lemmas will be proved in the remaining sections of the paper. 

\begin{proof}
Let $\SetS$ be a universal spectrum. If $\SetS$ has a positive limit (not necessarily in $\SetS$), then $\dis(\US)=2$ by Lemma~\ref{lem:poslimno2}.
 If $\SetS$ has no positive limit but has 0 as a limit, then $\dis(\US)=2$ or  $\omega$ by Lemma \ref{lem:infd0}.
Moreover $\dis(\US)=2$ if and only if $\mathrm{gap}(\SetS_{\geq s})=0$ for some positive number $s\in \SetS$; as $\SetS$ is assumed to have no positive limit, this latter condition is equivalent to  $\SetS$ containing arbitrarily large elements of arbitrarily small distance.

Finally, if $\SetS$ does not have a limit, then by Lemma \ref{lem:nolim} $\dis(\US)=2$ if and only if there exist numbers $a< b \in \SetS$ with $b-a \leq \min(\SetS_{>0})$, that is  contains two elements of distance smaller than the minimum positive element of $\SetS$.
\end{proof}

It follows that the well known \emph{rational Urysohn space}, that is $\US$ for $\SetS$ consisting of the non-negative rational numbers, has distinguishing number 2. Moreover we shall see following Theorem \ref{thm:4valassoc} that various examples exist showing that all those cases of the Main Theorem do occur.  \\

It is worth noting that the rigid subspaces which are used in the proofs of those necessary lemmas are almost always rigid  forests determined by the $s$-distance graph of a metric space for some $s$ in the spectrum: there is an edge between $x,y \in \US$ if and only if the distance between $x$ and $y$ is $s$. \\

We conclude the introduction by drawing a parallel with other work.  Imrich et al.\ in \cite{ISTW13} have shown
that a countable permutation group acting on a countable set has distinguishing number 2, under the additional assumption that the group has \emph{infinite motion}, meaning that every non-identity group element moves infinitely many elements. The automorphism groups of Urysohn spaces in this paper have infinite motion, but are uncountable. 
Moreover recall that, up to (topological group) isomorphism, the closed subgroups of the infinite symmetric $S_{\infty}$ are exactly the automorphism groups of countable structures, and thus our work can be viewed in that setting. The work of Imrich et al. in \cite{ISTW13} is also focused on closed subgroups of $S_{\infty}$, where in particular they conjecture that any closed subgroup of $S_{\infty}$ having infinite motion and where all orbits of its point stabilizers are finite has distinguishing number 2; this is the so-called \emph{Infinite Motion Conjecture for Permutation Groups}. However the  point stabilizers of automorphism groups of homogeneous Urysohn spaces all are infinite. \\

\section{General Notions and Preliminaries}

A relational structure $\RS$ is {\em rigid} if its group of automorphisms $\Aut(\RS)$ consists only of the identity. The idea behind the distinguishing number is to find the smallest number of predicates $\langle P_i:i<d \rangle$ such that the expanded structure $(\RS; P_i:i<d)$ becomes rigid.

For a metric space $\MS = (M,d_{\MS})$, let $\spec ( \MS )$, the {\em spectrum} of $\MS$, be the set of the distances between points of $\MS$. A metric space $\MS$ is {\em universal} if it embeds every finite metric space $\NS$ with spectrum $\spec(\NS)\subseteq \spec(\MS)$.

For $\SetS$ a set of reals and $r \in \Reals$, let $\SetS_{>r}=\{s\in \SetS : s > r\}$, and similarly for $\SetS_{\geq r}$. If $\SetS \subseteq \Reals_{\geq 0}$ is countable, then let $\mathfrak{A}(\SetS)$ denote the set of finite metric spaces whose spectrum is a subset of $\SetS$. Note that $\mathfrak{A}(\SetS)$ is an age (meaning that it is closed under isomorphism and substructures, and up to isomorphism has only countably many members),  and we will need conditions on $\SetS$ for which the age $\mathfrak{A}(\SetS)$ has the amalgamation property.

\begin{defin}\label{defin:amalg}
We call a pair of metric spaces $\AS$ and $\BS$ an {\em amalgamation instance} if $\dist_{\AS}(x,y)=\dist_{\BS}(x,y)$ for all $x,y\in A\cap B$. If so, then we define:
\[
\amalg(\AS,\BS)=\{\CS=(A\cup B; \dist_{\CS}) : \; \CS\restriction {A}=\AS \text{ and } \CS\restriction {B}=\BS\}.
\]
For $\SetS\subseteq \Reals_{\geq 0}$ and $\{\AS,\BS\}\subseteq \mathfrak{A}(\SetS)$ let:
\[
\amalg_\SetS(\AS,\BS)=\{\CS\in \amalg(\AS,\BS): \spec(\CS)\subseteq \SetS\}.
\]
Finally, we say that the set $\SetS \subseteq \Reals_{\geq 0}$ has the {\em amalgamation property} if $\amalg_\SetS(\AS,\BS)\not=\emptyset$ for all amalgamation instances $\{\AS, \BS\} \subseteq \mathfrak{A}(\SetS)$.
\end{defin}

We now define the ``4-values'' condition, which is the description that triangles amalgamate.

\begin{defin}\label{defin:4val}
A set $\SetS\subseteq \Reals_{\geq 0}$ satisfies the {\em 4-values} condition if $\amalg_\SetS(\AS,\BS)\not=\emptyset$ for any two amalgamation instances of the form $\AS=(\{x,y,z\};\dist_{\AS})$ and $\BS=(\{x,y,w\};\dist_{\BS})$ in $ \mathfrak{A}(\SetS)$.
\end{defin}

There are several equivalent definitions of the 4-values condition. The first version has been given in \cite{DLPS} together with Theorem \ref{thm:Frabas} below. In \cite{Sadist} there is an equivalent version of the 4-values condition given and Definition \ref{defin:4val} is stated as Lemma 3.3. Definition \ref{defin:4val} here is the one most suitable for our purposes.

It is evident that  the spectrum of any  metric space satisfies the 4-values condition, and it provides the leading condition for a set $\SetS$ to yield a non-trivial Urysohn metric space. 

\begin{defin}\label{defin:universp}
A countable set $\SetS\subseteq \Reals_{\geq 0}$ is a {\em universal spectrum} if the following items hold.
\begin{enumerate}
\item The element $0$ is in $\SetS$ and $\SetS$ contains at least one positive number; 
\item The set $\SetS$ satisfies the 4-values condition.
\end{enumerate}
\end{defin}

 It follows from Theorem 3.8 of \cite{Sadist} that if $\SetS$ is a universal spectrum, then $\mathfrak{A}(\SetS)$ is an age with amalgamation. Hence, the next  theorem follows from the general \Fra theory.

\begin{thm}\label{thm:Frabas}\cite{f}
If $\SetS\subseteq \Reals_{\geq 0} $ is a universal spectrum, then $\mathfrak{A}(\SetS)$ is an age with amalgamation and there exists a countable homogeneous, universal, metric space $\US$ whose spectrum is $\SetS$.
\end{thm}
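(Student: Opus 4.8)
This is an application of \Fra's theorem, so the plan is to verify its hypotheses for the class $\mathfrak{A}(\SetS)$ and then to unwind what its conclusion says for metric spaces. Recall (see \cite{f}) that \Fra's theorem produces, from any class of finite structures that is non-empty, closed under isomorphism, hereditary, has only countably many isomorphism types, and enjoys the joint embedding and amalgamation properties, a unique countable structure --- its \Fra limit --- which is homogeneous and has age exactly that class; in particular, the limit embeds every member of the class.

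First I would check these hypotheses for $\mathfrak{A}(\SetS)$. Non-emptiness: since $0 \in \SetS$ and $\SetS$ contains a positive number $s$, the one-point metric space and the two-point metric space of distance $s$ both have spectrum in $\SetS$, so they lie in $\mathfrak{A}(\SetS)$. Heredity, closure under isomorphism, and countability of the isomorphism types are already noted above (a finite metric space on $n$ points is coded by a symmetric matrix over the countable set $\SetS$), so $\mathfrak{A}(\SetS)$ is an age. Amalgamation: because $\SetS$ is a universal spectrum it satisfies the 4-values condition of Definition~\ref{defin:4val}, and by Theorem~3.8 of \cite{Sadist} this condition implies that $\mathfrak{A}(\SetS)$ has the amalgamation property of Definition~\ref{defin:amalg} --- the one genuinely non-formal ingredient, which I would quote rather than reprove. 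The joint embedding property is then immediate, being amalgamation over the empty (or a one-point) metric space. This already gives the first assertion of the theorem.

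\Fra's theorem now yields a countable homogeneous structure $\US$ with $\mathrm{age}(\US) = \mathfrak{A}(\SetS)$; in particular $\US$ embeds every finite metric space whose spectrum lies in $\SetS$, i.e.\ $\US$ is universal. To conclude I would verify the remaining metric content. First, $\US$ is a genuine metric space: being an increasing union of finite metric spaces from $\mathfrak{A}(\SetS)$, any three of its points lie in a common finite subspace, whence the triangle inequality holds for them (symmetry and positivity being inherited pointwise). Second, $\spec(\US) = \SetS$: every finite subspace of $\US$ lies in $\mathfrak{A}(\SetS)$, so $\spec(\US) \subseteq \SetS$; conversely $0 \in \spec(\US)$ trivially, and for each positive $s \in \SetS$ the two-point metric space of diameter $s$ lies in $\mathfrak{A}(\SetS)$ and hence embeds into $\US$, so $s \in \spec(\US)$.

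I expect no serious obstacle: once the implication ``4-values $\Rightarrow$ amalgamation'' is available from \cite{Sadist}, everything else is routine \Fra bookkeeping. The two points most easily overlooked --- and which I would make explicit --- are that the structure \Fra's theorem returns really is a metric space (not merely a relational structure with the prescribed hereditary age) and that its spectrum is exactly $\SetS$ rather than a proper subset; both are addressed above.
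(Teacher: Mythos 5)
Your proposal is correct and follows the same route as the paper: the paper also obtains the amalgamation property from the 4-values condition by citing Theorem~3.8 of \cite{Sadist} and then invokes general \Fra theory, leaving the remaining bookkeeping implicit. You simply make explicit the routine verifications (joint embedding, that the limit is a genuine metric space, and that its spectrum is exactly $\SetS$) that the paper omits.
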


Even though by definition an amalgamation instance can be amalgamated, we will in many cases want to do so controlling the new distances. We therefore have the following.

\begin{lem}\label{lem:amalbetter}
Let $\SetS\subseteq \Reals_{\geq 0}$ be a universal spectrum, and $\AS$ and $\BS$ an amalgamation instance in $\mathfrak{A}(\SetS)$. If there exists a number $s\in \SetS$ so that
\[ s\leq \dist_{\AS}(a,x)+\dist_{\BS}(x,b)\]
for all $a\in A\setminus B$, $x\in A\cap B$, and $b\in B\setminus A$, then there exists a metric space $\CS\in \amalg_{\SetS}(\AS,\BS)$ so that $\dist_{\CS}(a,b)\geq s$ for all $a\in A\setminus B$ and $b\in B\setminus A$.
\end{lem}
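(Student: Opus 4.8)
The plan is not to build the amalgam from scratch but to start from an arbitrary one and simply enlarge the new distances. Since $\SetS$ is a universal spectrum, Theorem~\ref{thm:Frabas} tells us $\mathfrak{A}(\SetS)$ has amalgamation, so I would first fix \emph{any} $\CS_0\in\amalg_\SetS(\AS,\BS)$; this already settles existence, and the entire issue is the lower bound. I would then define $\CS$ on the point set $A\cup B$ by keeping $\dist_\CS(u,v)=\dist_{\CS_0}(u,v)$ whenever $\{u,v\}$ is not a \emph{cross pair} (meaning a pair with one point in $A\setminus B$ and the other in $B\setminus A$), and setting $\dist_\CS(a,b)=\max(\dist_{\CS_0}(a,b),s)$ on cross pairs. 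Two points of $A$ never form a cross pair, and the same holds for $B$, so $\CS\restriction A=\CS_0\restriction A=\AS$ and $\CS\restriction B=\BS$; since $s\in\SetS$ we get $\spec(\CS)\subseteq\spec(\CS_0)\cup\{s\}\subseteq\SetS$; and $\dist_\CS(a,b)\geq s$ on cross pairs by construction. So everything reduces to checking that $\CS$ satisfies the triangle inequality, which I would do by a short case analysis of how many sides of a triangle are cross pairs (equivalently, where the ``third vertex'' sits).

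A triangle all of whose sides are non-cross pairs has all vertices in $A$ or all in $B$, so it inherits the triangle inequality from $\CS_0$ verbatim. A triangle with two cross-pair sides must be of the form $\{a,a',b\}$ with $a,a'\in A\setminus B$, $b\in B\setminus A$ (or the symmetric shape $\{a,b,b'\}$), its non-cross side being $\{a,a'\}$; here all the needed inequalities follow formally from validity of $\CS_0$ and monotonicity of $\max$. The only one deserving a remark is $\dist_\CS(a,b)\leq\dist_\CS(a,a')+\dist_\CS(a',b)$: the right-hand side is $\geq s$ because of the summand $\max(\dist_{\CS_0}(a',b),s)$, and it is $\geq\dist_{\CS_0}(a,b)$ by validity of $\CS_0$, hence it dominates $\max(\dist_{\CS_0}(a,b),s)$.

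The single place where the hypothesis of the lemma is consumed — and the step I expect to be the crux — is the triangle $\{a,b,x\}$ with $a\in A\setminus B$, $b\in B\setminus A$ and $x\in A\cap B$, whose only cross-pair side is $\{a,b\}$, so that $\dist_\CS(a,x)=\dist_\AS(a,x)$ and $\dist_\CS(b,x)=\dist_\BS(x,b)$ are untouched. The inequality to verify is $\max(\dist_{\CS_0}(a,b),s)\leq\dist_\AS(a,x)+\dist_\BS(x,b)$, and this is exactly where the hypothesis $s\leq\dist_\AS(a,x)+\dist_\BS(x,b)$ is used, together with $\dist_{\CS_0}(a,b)\leq\dist_\AS(a,x)+\dist_\BS(x,b)$ from validity of $\CS_0$; the two ``reverse'' inequalities of this triangle follow from validity of $\CS_0$ and $\max(\dist_{\CS_0}(a,b),s)\geq\dist_{\CS_0}(a,b)$. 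The real obstacle here is conceptual rather than computational: one has to notice that enlarging only the cross distances, and only up to $s$, cannot break any triangle, because a raised cross distance can endanger a triangle only as its ``long'' side, and in every such triangle either a second side is also a cross distance (hence likewise at least $s$) or the third vertex lies in the common part $A\cap B$ — which is precisely the configuration the hypothesis on $s$ was designed to control.
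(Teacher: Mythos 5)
Your proposal is correct and follows essentially the same route as the paper: take an arbitrary amalgam guaranteed by the amalgamation property, replace each cross distance $\dist(a,b)$ by $\max(\dist(a,b),s)$, and verify the triangle inequality by classifying triangles according to how many of their sides are cross pairs. Your treatment of the two-cross-side triangles via the single observation that the right-hand side dominates both $s$ and the old distance is in fact slightly cleaner than the paper's subcase split on whether each cross distance lies above or below $s$, but it is the same construction and the same decomposition.
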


\begin{proof}
Because $\SetS\subseteq \Reals_{\geq 0}$ satisfies the 4-values condition, there is a metric space $\DS \in \amalg_{\SetS}(\AS,\BS)$. Let $\CS$ be the binary relational structure obtained from $\DS$ by replacing the new distances as follows:
\[ \dist_{\CS}(a,b) = \max\{\dist_{\DS}(a,b),s\} \]
for every $a\in A\setminus B$ and every $b\in B\setminus A$.

We claim that $\CS$ is indeed a metric space in $\mathfrak{A}(\SetS)$, and hence, all triangles of $\CS$ not in $\AS$ and not in $\BS$ must be verified to be metric.

One type of such triangles is of the form $\{a,x,b\}$ with $a\in A\setminus B$ and $b\in B\setminus A$ and $x\in A\cap B$. As a triangle of $\DS$ it is metric, and together with the assumption on $s$ we derive that
\[ \dist_{\CS}(a,b) = \max\{\dist_{\DS}(a,b),s\} \leq \dist_{\AS}(a,x)+\dist_{\BS}(x,b).\]

The other type of triangles we need to verify is of the form either $\{a,a',b\}$ with $\{a,a'\}\subseteq A\setminus B$ and $b\in B\setminus A$, or the other way around of the form $\{a,b,b'\}$ with $a\in A\setminus B$ and $\{b,b'\}\subseteq B\setminus A$; it suffices to consider the first case.

If both $\dist_{\DS}(a,b)\leq s$ and $\dist_{\DS}(a,'b)\leq s$, then $\dist_{\CS}(a,b)= \dist_{\CS}(a',b)=s$, and  $\dist_{\CS}(a,a')= \dist_{\DS}(a,a')\leq \dist_{\DS}(a,b) + \dist_{\DS}(a',b) \leq \dist_{\CS}(a,b)+ \dist_{\DS}(a',b)$ shows that $\{a,a',b\}$ is metric.
If both $\dist_{\DS}(a,b)\geq s$ and also $\dist_{\DS}(a',b)\geq s$, then the side lengths have not changed from $\DS$ to $\CS$ and hence, again metric.

For the remaining case, say $\dist_{\DS}(a,b)<s$ but $\dist_{\DS}(a',b)\geq s$. Then $\dist_{\CS}(a,b)=s \leq \dist_{\DS}(a',b) =\dist_{\CS}(a',b) \leq
\dist_{\CS}(a,a') + \dist_{\CS}(a',b)$. The verification of the other two sides is immediate.
\end{proof}

The following general notions will useful to analyze various universal spectra.

\begin{defin}\label{defin:initjump}
Let $\SetS$ be a universal spectrum. An element $s\in \SetS$ is called:
\begin{enumerate}
\item an {\em initial number} of $\SetS$ if $[{1\over 2}s,s)\cap \SetS=\emptyset$,
\item a {\em jump number} of $\SetS$ if $(s,2s]\cap \SetS=\emptyset$,
\item an {\em insular number} of $\SetS$ if it both an initial and a jump number of $\SetS$.
\end{enumerate}

A subset $B\subseteq \SetS$ is a {\em block} of $\SetS$ if:
\begin{enumerate}
\item $B$ is an interval of $\SetS$,
\item $\min B$ is a positive initial number of $\SetS$,
\item either $B$ is unbounded, or $\max B$ is a jump number of $\SetS$, and
\item $\max B$ (if it exists) is the only jump number (of either $B$ of $S$).
\end{enumerate}
\end{defin}

Hence, if $ s \in \SetS$ is insular, then $B=\{s\}$ is a block consisting of only one element. Note also for future reference that if $s>0$ is a jump number of $\SetS$, then the relation $\stackrel{s}{\sim}$ given by $x\stackrel{s}{\sim}y$ if $\dist(x,y)\leq s$ is an equivalence relation on $\US$, this will soon play an important role. 

\section{Universal spectra without positive limits} 

In this section, we develop tools to handle the case where the spectrum does not have a positive limit. In particular, no positive element $r$ of $\Reals$ is a limit of $\SetS$, whether $r \in \SetS$ or not.

To handle this case, we first call a  set $\SetS \subseteq \Reals_{\geq 0} $ {\em inversely well ordered} if every non-empty bounded above subset of $\SetS$ has a maximum. We recall the $\oplus$ operation from \cite{SA2}, defined there for a closed set of reals, but which for our purpose can also be defined for inversely well ordered sets.

\begin{defin}\label{defin:oplus} 
Let $\SetS$ be an inversely well ordered, and for $r,t \in \SetS$ define
\[
r\oplus t =\max\{s \in \SetS : s \leq r+t \}.
\]
\end{defin}

\noi Observe that if $\{r,s,t\}\subseteq \SetS$, then $r\oplus t\geq s$ if and only if $r+t\geq s$; and hence, we note the following obvious observation which will be used without warning.

\begin{lem}
A triangle is metric if and only if the $\oplus$ sum of any two of the three side lengths is larger than or equal to the third side length.
\end{lem}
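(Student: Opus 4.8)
The plan is to reduce the lemma directly to the ordinary triangle inequality, using the observation stated immediately before it. Denote the three side lengths of the triangle by $r,s,t$; since the triangle is a member of $\mathfrak{A}(\SetS)$, all three numbers lie in $\SetS$, which is exactly the hypothesis needed to invoke that observation.

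First I would recall that, by definition, a triangle with side lengths $r,s,t$ is metric if and only if each of the three inequalities $r\le s+t$, $s\le r+t$, and $t\le r+s$ holds. Hence it suffices to prove, for any $\{r,s,t\}\subseteq\SetS$, that $s\le r+t$ is equivalent to $s\le r\oplus t$, and then to apply this to each of the three cyclic arrangements of $(r,s,t)$.

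For that equivalence, unwind Definition~\ref{defin:oplus}: $r\oplus t=\max\{u\in\SetS:u\le r+t\}$. On the one hand $r\oplus t\le r+t$ always, so $s\le r\oplus t$ implies $s\le r+t$. On the other hand, if $s\le r+t$ then $s$ itself belongs to the set $\{u\in\SetS:u\le r+t\}$ --- this is the one place we use $s\in\SetS$ --- so $s\le\max\{u\in\SetS:u\le r+t\}=r\oplus t$. Combining the three instances yields the lemma.

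I do not expect a genuine obstacle here; the only subtleties are bookkeeping. One must ensure all three side lengths lie in $\SetS$ so that the observation applies, and one must know the maximum in Definition~\ref{defin:oplus} exists, which is guaranteed because $\SetS$ is inversely well ordered. Degenerate triangles, in which a side length equals $0$ (two vertices coincide), are covered by the same three inequalities and need no separate treatment.
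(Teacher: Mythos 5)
Your proof is correct and follows exactly the route the paper intends: the lemma is stated as an immediate consequence of the observation that for $\{r,s,t\}\subseteq\SetS$ one has $r\oplus t\geq s$ if and only if $r+t\geq s$, which you verify by noting $r\oplus t\leq r+t$ and that $s\in\SetS$ witnesses membership in the set being maximized. No further comment is needed.
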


\noi The $\oplus$ operation on $\SetS$ can easily be verified to be commutative and monotone. Further it was shown \cite{SA2} that the associativity of the $\oplus$ operation is equivalent to the 4-value condition for a closed set. That assumption, however, was only used to justify the operation and the same argument can be used for the following.

\begin{thm} \label{thm:4valassoc}
If $\SetS$ is an inversely well ordered set, then it satisfies the 4-value condition if and only if the $\oplus$ operation on $\SetS$ is associative.
\end{thm}

This result can be used to easily verify that the following inversely well ordered sets all satisfy the 4-values condition:
\[\begin{array}{lll}
\SetS_1 & = & 0 \cup \{ 1+1/n: n \in \omega \}, \\
\SetS_{2a}  & = & 0 \cup \{ 1 / 2^{2n}: n \in \omega \}, \\
\SetS_{2b} & = &  0 \cup \{ 1 /2^{2n}: n \in \omega \} \cup \{ 2^n,2^n+1/n: n \in \omega\}, \\
\SetS_{3a} & = & \{ 0,1 \}, \\
\SetS_{3b} & = & \{ 0,1,2 \}. 
\end{array}\]

\noi The set $\SetS_1$ has 1 as a limit. Both sets $\SetS_{2a}$ and  $\SetS_{2b}$ have no positive limits but do have 0 as a limit;  $\SetS_{2b}$ has arbitrarily large elements if arbitrarily small distance, while  $\SetS_{2a}$ does not. 
Both sets $\SetS_{3a}$ and  $\SetS_{3b}$ do not have any limit; $\SetS_{3b}$ has two elements of distance smaller than or equal to the minimum positive element, while $\SetS_{3a}$ does not have such elements. Hence this shows that all types universal spectrum of the Main Theorem do occur.  

We note that the homogenous Urysohn space $\U_{\SetS_{3b}}$ is nothing else but the Rado graph, hence providing another (albeit lengthy) proof that its distinguishing number is 2. \\

Our arguments below rely on an analysis of inversely well ordered set, and moreover we use the $\oplus$ operation to construct a specific and controlled amalgamation.

\begin{lem}\label{lem:oplusamalg}
Let $\SetS$ be an inversely well ordered universal spectrum, and $\AS$ and $\BS$ in $\mathfrak{A}(\SetS)$ an amalgamation instance.
Then there exists a unique metric space $\CS\in \amalg_\SetS(\AS,\BS)$, which we denote by $\amalg^\oplus_\SetS(\AS,\BS)$,
 such that:
\[ \dist_{\CS}(a,b)= \min\{\dist_{\AS}(a,x)\oplus \dist_{\BS}(x,b) : \; x\in A\cap B\} \]
for all $a\in A\setminus B$ and $b\in B\setminus A$. 
\end{lem}

\begin{proof}
Let $\CS$ be defined as above, we must show that every triangle of $\CS$ is metric. 

Let $\DS\in \amalg(\AS,\BS)$. Note first that $\dist_{\DS}(a,b)\leq \dist_{\CS}(a,b)$ for all $a\in A\setminus B$ and $b\in B\setminus A$; this is because $ \dist_{\DS}(a,b) \leq \dist_{\AS}(a,x) + \dist_{\BS}(x,b)$ for any $x\in A\cap B$, and therefore, $ \dist_{\DS}(a,b) \leq \dist_{\AS}(a,x) \oplus \dist_{\BS}(x,b)$ by definition of $\oplus$.

The first kind of triangles to consider are of the form $\{a,x,b\}$ for $a\in A\setminus B$, $x\in A\cap B$ and $b\in B\setminus A$. Since $\DS$ is metric and $\dist_{\DS}(a,b)\leq \dist_{\CS}(a,b)$, all we need to verify is the inequality $\dist_{\CS}(a,b) \leq \dist_{\AS}(a,x) + \dist_{\BS}(x,b)$. Let $x'=\mu(a,b)$. By definition, we have that
$\dist_{\CS}(a,b) = \dist_{\AS}(a,x')\oplus\dist_{\BS}(b,x') \leq \dist_{\AS}(a,x')+\dist_{\BS}(b,x') \leq
\dist_{\AS}(a,x)+\dist_{\BS}(b,x)$.

Now, by symmetry, the only other case is a triangle $\{a,a',b\}$ for $\{a,a'\}\subseteq A\setminus B$ and $b\in B\setminus A$. Because $\dist_{\DS}(a,b)\leq \dist_{\CS}(a,b)$ and $\dist_{\DS}(a',b)\leq \dist_{\CS}(a',b)$, we then have that
\[ \begin{array}{ll}
\dist_{\CS}(a,a') & = \dist_{\DS}(a,a') \\
 & \leq \dist_{\DS}(a,b)+\dist_{\DS}(b,a') \\
 & \leq \dist_{\DS}(a,b)+\dist_{\CS}(b,a'). \\
 \end{array} \]
For the other sides, it remains to show without loss of generality that $\dist_{\CS}(a',b) \leq \dist_{\CS}(a',a) + \dist_{\CS}(a,b)$, or equivalently that $\dist_{\CS}(a',b) \leq \dist_{\CS}(a',a) \oplus \dist_{\CS}(a,b)$.

For this let $z=\mu(a,b) \in A\cap B$ be such that \[ \dist_{\CS}(a,b)=\dist_{\AS}(a,z)\oplus \dist_{\BS}(z,b)\]
and such that $\dist_{\AS}(a,z) + \dist_{\BS}(z,b)$ is as small as possible.

But $\dist_{\AS}(a',z) \leq \dist_{\AS}(a',a) + \dist_{\AS}(a,z)$, equivalently
\[ \dist_{\CS}(a',z) \leq \dist_{\CS}(a',a) \oplus \dist_{\CS}(a,z).\] 
Hence, \[ \begin{array}{ll}
\dist_{\CS}(a',b) & \leq \dist_{\CS}(a',z) \oplus \dist_{\CS}(z,b) \\
 & \leq (\dist_{\CS}(a',a) \oplus \dist_{\CS}(a,z)) \oplus \dist_{\CS}(z,b) \\
 & = \dist_{\CS}(a',a) \oplus ( \dist_{\CS}(a,z) \oplus \dist_{\CS}(z,b)) \\
 & =\dist_{\CS}(a',a) \oplus \dist_{\CS}(a,b).
 \end{array} \]
 This completes the proof.
\end{proof}

We are now ready to further analyze the structure of the universal spectrum without positive limits, but first some useful terminology.

\begin{defin}\label{defin:cover}
Let $\SetS$ a universal spectrum without positive limits.
\begin{enumerate}
\item For $s\in \SetS$, let $s^-$ be the largest number in $\SetS$ smaller than $s$ if $s>0$, and $0^-=0$.
\item If $s\not=\max \SetS$, then let $s^+$ be the smallest number in $\SetS$ larger than $s$, and let $s^+=s$ if $s=\max \SetS$.
\item Two numbers $s < t \in \SetS$ are said to be {\em consecutive} if $s^+=t$ (or if $t^-=s$).
\item The {\em cover} of $\{r,t\}$, is the number (in $\SetS$):
\[ \min\{s\in \SetS : \; |r-t|\leq s\}.\]
\item The {\em gap} at $s \in \SetS$, denoted by $\mathrm{gap}(s)$, is the number (in $\Reals$):
\[ \min\{|s-t| : \; t \in \SetS \setminus \{s\} \}. \]
\item If $T\subseteq \SetS_{>0}$, then $\mathrm{gap}(T)=\min\{\mathrm{gap}(t) : t\in T\}$.
\end{enumerate}
\end{defin}

Note that if $\{s,t\}\in \SetS$ with $s\not=t$, then $\mathrm{gap}(s) \leq |s-t|$. Hence, we immediately have the following general fact.

\begin{fact}\label{fact:gap}
Let $\SetS$ be a universal spectrum without positive limits, and $\MS$ be a metric space with $\spec(\MS)\subseteq \SetS$.
However, if $\dist(x,y)<\mathrm{gap}(\dist(x,z))$ for any three distinct points $\{x,y,z\}\subseteq M$, then $\dist(x,z)=\dist(y,z)$.
\end{fact}

\begin{lem}\label{lem:jump1cl}
If $\SetS$ is a universal spectrum without positive limits, then the cover $c$ of two consecutive numbers $r<t \in \SetS$ with $r+r\geq t$ is an initial number of $\SetS$.
\end{lem}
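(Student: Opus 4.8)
The plan is to argue by contradiction. Suppose $c$ is not an initial number, so there is some $s\in\SetS$ with $c/2\le s<c$; I will produce an amalgamation instance in $\mathfrak{A}(\SetS)$ whose amalgam is forced to assign a distance strictly between $r$ and $t$, which is impossible because $r$ and $t$ are consecutive in $\SetS$.

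First I would record the needed arithmetic. From $r+r\ge t$ and $t>0$ we get $r>0$; since $r\ge t-r$ and $r\in\SetS$, the cover satisfies $t-r\le c\le r$; and by minimality of $c$ no element of $\SetS$ lies in $[t-r,c)$. Hence the chosen $s$, being an element of $\SetS$ with $s<c$, cannot lie in $[t-r,c)$, so in fact $s<t-r$. Moreover, from $s\ge c/2$ and $c\ge t-r$ we get $2s\ge t-r$. The two facts $s<t-r\le 2s$ are the heart of the matter.

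Next I would write down two metric triangles to be glued along a common edge of length $c$. Let $\AS$ be the metric space on $\{p,q,z\}$ with $\dist(p,q)=r$, $\dist(q,z)=c$ and $\dist(p,z)=t$: it is a metric triangle because $c\le r<r+t$, $t\le c+r$ (as $c\ge t-r$), and $r\le c+t$, and its spectrum is contained in $\SetS$. Let $\BS$ be the metric space on $\{q,z,w\}$ with $\dist(q,z)=c$ and $\dist(q,w)=\dist(z,w)=s$: it is a metric triangle because $c\le 2s$, and its spectrum is contained in $\SetS$. Since $\dist_{\AS}(q,z)=\dist_{\BS}(q,z)=c$, the pair $(\AS,\BS)$ is an amalgamation instance in $\mathfrak{A}(\SetS)$; as $\SetS$ is a universal spectrum, $\mathfrak{A}(\SetS)$ is an age with amalgamation (Theorem~\ref{thm:Frabas}), so there is some $\CS\in\amalg_\SetS(\AS,\BS)$, and in particular $e:=\dist_{\CS}(p,w)$ is an element of $\SetS$.

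Finally I would pin down $e$. Metricity of the triangles $\{p,q,w\}$ and $\{p,z,w\}$ inside $\CS$ forces $r-s\le e\le r+s$ and $t-s\le e\le t+s$; since $0\le s<r<t$, this reduces to $t-s\le e\le r+s$, a nonempty range precisely because $2s\ge t-r$. But $s<t-r$ gives $e\ge t-s>r$ and $e\le r+s<t$, so $e\in(r,t)\cap\SetS$, which is empty because $r$ and $t$ are consecutive --- the desired contradiction, so $c$ is an initial number. The one step needing real insight is building this four-point gadget, in which a single new distance is pushed up, away from $r$, through the long edge $pz$ of length $t$, and simultaneously pushed down, away from $t$, through the short edge $zw$ of length $s$, so that the constraint $c/2\le s<c$ translates into an empty target interval; everything else is triangle-inequality bookkeeping together with the earlier observation that such an $s$ must in fact satisfy $s<t-r$.
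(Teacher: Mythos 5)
Your proof is correct and follows essentially the same route as the paper: both glue the isoceles triangle with legs $s\in[\tfrac{c}{2},c)\cap\SetS$ and base $c$ (witnessing that $c$ is not initial) to the triangle with sides $r,c,t$ along the common edge of length $c$, and then show the forced fourth distance would have to lie strictly between the consecutive elements $r$ and $t$ of $\SetS$, contradicting the 4-values condition. The only cosmetic difference is that the paper phrases the final contradiction as a case split on whether the new distance is $\leq r$ or $>r$, whereas you package it as an empty target interval $(r,t)\cap\SetS$; the underlying inequalities are identical.
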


\begin{proof}
Note that $c\leq r$ because $r+r\geq t$ and $c$ is the smallest number in $\SetS$ with $r+c\geq t$.
Assume that $c$ is not initial. Then there exists a number $p<c$ with $p+p\geq c$; this implies that the two triangles, $\mathrm{T}_0$ with side lengths $p,p,c$, and $\mathrm{T}_1$ of side lengths $\{r,t,c\}$, are metric. These two triangles form an amalgamation instance via the common side $c$. Hence, because $\SetS$ satisfies the 4-values condition there exists a number $s \in \SetS$ and a metric space $\MS \in \amalg(\mathrm{T}_0,\mathrm{T}_1)$ with amalgamation distance $s$.

This is not possible. Indeed first note that $r+p<t$ because $p<c$ and again $c$ is the smallest number in $\SetS$ with $r+c\geq t$. Now, if $s\leq r$, then $s+ p \leq r+ p <t$ and the triangle $\{t,p,s\}$ is not metric. If on the other hand $s> r$, then $s \geq t$ because $r<t$ are consecutive; but now $r+ p <t \leq s$ and the triangle $\{r,p,s\}$ is not metric.
\end{proof}

This gives the following.

\begin{lem}\label{lem:lim0bl}
Let $\SetS$ be a universal spectrum without positive limits. If 0 is a limit of $\SetS$, then 0 is also a limit of the set of initial numbers of $\SetS$, and also a limit of the set of jump numbers of $\SetS$.
\end{lem}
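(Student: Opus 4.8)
The plan is to fix a positive $t_0\in\SetS$ and write $\SetS\cap(0,t_0]$ in decreasing order as $t_0=a_0>a_1>a_2>\cdots$. Since $0$ is a limit of $\SetS$ and $\SetS$ has no positive limits, this set is infinite and accumulates only at $0$, so $a_n\to 0$, and consecutive terms satisfy $a_{n+1}=(a_n)^-$ and $(a_{n+1})^+=a_n$ in the notation of Definition~\ref{defin:cover}. All the work is then done by looking at each consecutive pair $a_{n+1}<a_n$ and distinguishing whether it is a \emph{big jump} ($2a_{n+1}<a_n$) or a \emph{small jump} ($2a_{n+1}\ge a_n$).

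First I would handle the initial numbers. If $2a_{n+1}<a_n$ then $(a_n)^-=a_{n+1}<\tfrac12 a_n$, so $[\tfrac12 a_n,a_n)\cap\SetS=\emptyset$ and $a_n$ itself is an initial number. If instead $2a_{n+1}\ge a_n$, then $a_{n+1}<a_n$ are consecutive with $a_{n+1}+a_{n+1}\ge a_n$, so Lemma~\ref{lem:jump1cl} applies: the cover $c_n$ of $\{a_{n+1},a_n\}$ is an initial number; moreover $a_n-a_{n+1}\le a_{n+1}\in\SetS$, so $c_n\le a_{n+1}$. Either way we get a positive initial number $e_n\le a_n$, and as $a_n\to 0$ these witness that $0$ is a limit of the set of initial numbers.

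Next the jump numbers. It suffices to produce infinitely many $n$ with $2a_{n+1}<a_n$: for each such $n$ we have $(a_{n+1})^+=a_n>2a_{n+1}$, hence $(a_{n+1},2a_{n+1}]\cap\SetS=\emptyset$, so $a_{n+1}$ is a jump number, and these tend to $0$. Suppose, for contradiction, that there is an $N$ with $2a_{n+1}\ge a_n$ for all $n\ge N$. Fix such an $n$; by the previous paragraph $c_n$ is an initial number with $0<c_n\le a_{n+1}<a_N$, and since the positive elements of $\SetS$ below $a_N$ are exactly $a_{N+1},a_{N+2},\dots$, we have $c_n=a_m$ for some $m>N$. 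But $a_m$ being initial forces $(a_m)^-=a_{m+1}<\tfrac12 a_m$, i.e. $2a_{m+1}<a_m$, contradicting $m\ge N$. Hence big jumps occur infinitely often, completing the proof.

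The crux --- and the one place the hypothesis that $\SetS$ is a genuine universal spectrum is used --- is ruling out the persistence of small jumps near $0$; absent that, consecutive terms could stay within a factor of two all the way down, leaving no jump numbers near $0$ (this is exactly what happens for sets such as $\{0\}\cup\{2^{-n}:n\in\omega\}$, which fail the $4$-values condition and so are not universal spectra). The only tool that breaks the impasse is Lemma~\ref{lem:jump1cl}: it turns a small jump into an initial number lying strictly lower in the sequence, and an initial number is by definition a big jump there, so an unbroken run of small jumps is self-defeating. The routine points to double-check are the well-definedness of the covers $c_n$ together with the estimate $c_n\le a_{n+1}$, and the observation that these covers, though only guaranteed to be initial, necessarily land in the tail of the enumeration.
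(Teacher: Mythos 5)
Your proof is correct and follows essentially the same route as the paper: consecutive pairs $r<t$ are split according to whether $2r<t$ (making $t$ initial by definition) or $2r\ge t$ (making the cover initial via Lemma~\ref{lem:jump1cl}), and jump numbers are obtained as predecessors of initial numbers. Your contradiction argument for the jump-number part is just a repackaging of the paper's direct observation that if $s$ is initial then $s^-$ is a jump number, so no substantive difference.
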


\begin{proof}
Let $r<t<\ell \in \SetS$ be two consecutive numbers, and let $c$ be their cover.
If $r+r\geq t$, then $c\leq r$ and it follows from Lemma \ref{lem:jump1cl} that $c$ is initial. However, if $r+r < t$, then $t$ itself is initial by definition. Thus, $\SetS$ contains arbitrarily small initial numbers.

Moreover, is $s$ is an initial number, then $s^-$ is a jump number. We therefore have that $\SetS$ contains arbitrarily small jump numbers as well.
\end{proof}

\subsection{$\US$ where $\SetS$ has no positive limits} 

In this subsection we continue with $\SetS$ a universal spectrum without positive limits, but we will focus on $\US$ the homogeneous structure with spectrum $\SetS$ and construct some well chosen automorphisms.

Recall that if $s>0$ is a jump number of $\SetS$, then the relation $\stackrel{s}{\sim}$ given by $x\stackrel{s}{\sim}y$ if $\dist(x,y)\leq s$ is an equivalence relation on $\US$. If $E$ is an $\stackrel{s}{\sim}$ equivalence class and $\MS$ the metric space induced by $E$, then $\spec(\MS)=\{r\in \SetS : r\leq s\}=\SetS_{\leq s}$. On the other hand, it is evident that $\spec(\MS)$ satisfies the 4-values condition, and hence, it is a universal spectrum. It follows that $\MS$ is isomorphic to the universal homogeneous metric space $\U_{\SetS_{\leq s}}$.

We now define the notion of dense subset of $\US$, and show that similar to the rationals, if $\US$ is partitioned into finitely many dense sets, then there is a non-trivial automorphism preserving the partition.

\begin{defin}\label{defin:densesu}
Let $\SetS$ be a universal spectrum. A subset $A$ of $\US$ is {\em dense} if $A\cap E\not=\emptyset$ for every jump number $s \in \SetS_{>0}$ and for every equivalence class $E$ of the relation $\stackrel{s}{\sim}$.
\end{defin}

We will in fact build a non-trivial automorphism which is an involution.

\begin{lem}\label{lem:denssubhom}
Let $\SetS$ be a universal spectrum without positive limits but with 0 as a limit, and let $\{A_i : i \in n \}$ form a finite partition of $\US$ into dense sets. 
It then follows that there exists, for every number $s \in \SetS_{>0}$, an automorphism $f$ of $\US$ such that:
\begin{enumerate}
\item $f$ preserves the partition, that is $f[A_i]=A_i$ for every $i\in n$, and
\item $f(f(x))=x$ and $\dist(x,f(x))=s$ for all $x \in \US$.
\end{enumerate}
\end{lem}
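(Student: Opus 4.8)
The plan is to construct $f$ by a back-and-forth argument, building an increasing chain of finite partial isomorphisms of $\US$ whose union is the desired involution. Fix an enumeration $\{x_k : k \in \omega\}$ of $\US$. I will build finite partial isometries $g_0 \subseteq g_1 \subseteq \cdots$ of $\US$ together with a fixed-point-free involution $\sigma_m$ on $\mathrm{dom}(g_m)$ so that the following invariants are maintained at each stage: (a) $\mathrm{dom}(g_m) = \mathrm{range}(g_m)$ and $g_m = g_m^{-1}$ as a relation (i.e. $g_m$ is symmetric, which is what forces the eventual $f$ to be an involution); (b) $\dist(x, g_m(x)) = s$ for every $x \in \mathrm{dom}(g_m)$; (c) $g_m$ respects the partition, i.e. $x \in A_i \iff g_m(x) \in A_i$; and (d) at stage $2k+1$ we have ensured $x_k \in \mathrm{dom}(g_m)$, and at stage $2k+2$ we have ensured $x_k \in \mathrm{range}(g_m) = \mathrm{dom}(g_m)$ (these coincide by (a), so really a single extension step suffices at each $x_k$).

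The crux is the extension step: given $g_m$ with domain $D$ and a new point $p = x_k \notin D$, I must find a point $q$ (to serve as $g_{m+1}(p) = f(p)$) such that $\dist(p,q) = s$, $q$ lies in the same partition block $A_i$ as $p$, $q \notin D$, and — crucially — $q$ is ``correctly placed'' relative to all of $D$: namely $\dist(q, g_m(d)) = \dist(p, d)$ for every $d \in D$, so that extending by the pair $\{p \mapsto q, q \mapsto p\}$ keeps $g_{m+1}$ a partial isometry and keeps it symmetric. To find such $q$ I first check that the desired finite metric space on $D \cup \{p,q\}$ is consistent, i.e. lies in $\mathfrak{A}(\SetS)$: this is exactly an amalgamation problem gluing the already-realized configuration on $D \cup \{p\}$ with the prescribed configuration on $D \cup \{q\}$ (mirror of $p$'s via $g_m$) over the common part $D$, subject to $\dist(p,q)=s$. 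The 4-values condition (via Theorem~\ref{thm:Frabas}, or more precisely via the controlled amalgamation of Lemma~\ref{lem:amalbetter} / Lemma~\ref{lem:oplusamalg}) guarantees some amalgam exists; I need the amalgam to assign $p$–$q$ distance exactly $s$, which is where I must verify the triangle inequalities $|\dist(p,d) - \dist(q, g_m(d))| \le s \le \dist(p,d) + \dist(g_m(d), q)$ hold — but since $\dist(p,d) = \dist(g_m(d), q)$ by the mirroring, the lower bound $s \le 2\dist(p,d)$ and upper bound are the only constraints, and for the problematic small values of $\dist(p,d)$ this is exactly where Fact~\ref{fact:gap} and the hypothesis that $0$ is a limit of $\SetS$ (so there are arbitrarily small jump numbers, Lemma~\ref{lem:lim0bl}) come into play. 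Once the abstract amalgam exists inside $\mathfrak{A}(\SetS)$, homogeneity of $\US$ supplies a genuine point $q \in \US$ realizing it over $D \cup \{p\}$.

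The remaining ingredient is the partition constraint $q \in A_i$: the amalgamation/homogeneity argument only produces \emph{some} point at the right distances, not one in the prescribed dense class. This is where density of the $A_i$ is used. The relevant observation is: the set of points $q$ realizing the prescribed isometry type over the finite set $D \cup \{p\}$ is itself (isomorphic to) a copy of some $\U_{\SetS_{\le s'}}$-like space, or at least it is ``block-dense'' in the sense of Definition~\ref{defin:densesu} restricted appropriately — more carefully, because $s$ is below a jump number or we can route through a jump number $s' \ge s$ arbitrarily small above the configuration's scale, the equivalence classes of $\stackrel{s'}{\sim}$ meeting the solution set are all hit by $A_i$. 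So one chooses $q$ inside $A_i$. I would isolate this as the place to be careful: one must argue that demanding $q \in A_i$ does not over-constrain, and this uses both that the $A_i$ are dense (meet every $\stackrel{s'}{\sim}$-class for every positive jump number $s'$) and that $0$ is a limit of $\SetS$ so such small jump numbers exist below any scale. I expect this interaction — simultaneously satisfying the metric amalgamation constraint \emph{and} landing in the right dense piece, uniformly down to arbitrarily small scales — to be the main obstacle, and it is precisely what the hypotheses of the lemma (no positive limit, but $0$ a limit) are tailored to handle. Finally, $f := \bigcup_m g_m$ is a bijection of $\US$ by (d), an isometry hence an automorphism, satisfies $f \circ f = \mathrm{id}$ and $\dist(x,f(x)) = s$ by (a),(b), and preserves the partition by (c), completing the proof.
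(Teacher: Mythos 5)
Your overall strategy is the paper's: build the involution as an increasing union of finite symmetric partial isometries, extend at each stage by a ``mirror'' point $q$ with $\dist(q,d)=\dist(p,g_m(d))$ and $\dist(p,q)=s$, realize the configuration in $\US$ by homogeneity, and then use density of $A_i$ together with an arbitrarily small jump number $r$ (Lemma~\ref{lem:lim0bl}) and Fact~\ref{fact:gap} to replace the realized point by one of $A_i$ inside its $\stackrel{r}{\sim}$-class, which has the same distances to the finite configuration. That last step, which you flag as the main obstacle, is exactly how the paper proceeds, and your account of it is correct.

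The one step that is wrong as written is the metric-consistency verification. You claim the binding constraint on the edge $pq$ is the lower bound $s\le 2\dist(p,d)$, and that Fact~\ref{fact:gap} and the existence of small jump numbers handle the ``problematic small values of $\dist(p,d)$.'' Neither is right: if the constraint really were $s\le 2\dist(p,d)$ it would simply fail whenever $\dist(p,d)<s/2$ (which certainly occurs), and Fact~\ref{fact:gap} has nothing to say about it. The inequality you display, $|\dist(p,d)-\dist(q,g_m(d))|\le s\le \dist(p,d)+\dist(g_m(d),q)$, is not the triangle inequality of any triangle of the configuration, since it mixes the two distinct vertices $d$ and $g_m(d)$. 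The correct observation --- and the reason no amalgamation, 4-values, or controlled-amalgamation lemma is needed at this point at all --- is that every new triangle of the mirrored configuration is \emph{congruent to a triangle already present in} $\US$: the triangle $\{p,q,d\}$ has side lengths $s$, $\dist(p,d)$ and $\dist(q,d)=\dist(p,g_m(d))$, which are precisely the side lengths of the existing metric triangle $\{d,g_m(d),p\}$ (recall $\dist(d,g_m(d))=s$); similarly $\{q,d,d'\}$ is congruent to $\{p,g_m(d),g_m(d')\}$. Hence the extended configuration is automatically a metric space with spectrum in $\SetS$, and universality plus homogeneity realize it over $D\cup\{p\}$. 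With that repair your argument is complete and coincides with the paper's proof.
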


\begin{proof}
The proof is an inductive construction on the countable domain of $\US$, and is a consequence of the following claim handling the inductive step.

\begin{claim*}
Let $\AS$ and $\BS$ be two disjoint and finite subspaces of $\US$ for which there exists an automorphism $g$ of the subspace induced by $A\cup B$ so that:
\begin{enumerate}
\item $g$ preserves the partition restricted to $A \cup B$, and
\item $g(x) \in B$, $g(g(x))=x$, and $\dist(x,g(x))=s$ for all $x \in A $.
\end{enumerate}
Let $u \in \US \setminus (A\cup B)$. Then there exists a point $v\in \US$ and an automorphism $g'$ of the subspace induced by $A\cup B\cup \{u,v\}$ so that:
\begin{enumerate}
\item $g'$ extends $g$, that is $g'(x)=g(x)$ for all $x\in A\cup B$,
\item $\dist(u,v)=s$,
\item $u$ and $v$ are in the same member of the partition, and
\item $g'(u)=v$ and $g'(v)=u$.
\end{enumerate}
\end{claim*}

To prove the claim, we first show that there exists a metric space $\MS$ with $M=A\cup B\cup \{u,v\}$ so that the following hold.
\begin{enumerate}
\item $\MS$ restricted to $A\cup B\cup \{u\}$ is equal to $\US$ restricted to $A\cup B\cup \{u\}$.
\item $\dist_{\MS}(u,v)=s$.
\item $\dist_{\MS}(v,x)=\dist(u,g(x))$ (and so $\dist_{\MS}(v,g(x))=\dist(u,x))$ for all $x\in A\cup B$.
\end{enumerate}
To verify that $\MS$ will indeed be a metric space under these conditions, it suffices that every triangle of $\MS$ is metric. Let $\{x,y,z\}$ be a triangle of $\MS$. If $v\not\in \{x,y,z\}$, then the triangle $\{x,y,z\}$ is metric because every triangle of $\US$ is metric. Now let $\{x,y,v\}$ be a triangle of $\MS$ with $u\not\in \{x,y\}$; but the triangle $\{g(x),g(y),u\}$ is metric and has the same side lengths as the triangle $\{x,y,v\}$, hence, the latter is metric. Let $\{x,u, v\}$ be a triangle of $\MS$. The sides have lengths $\dist(x,u)$, $\dist_{\MS}(x,v)=\dist(g(x),u)$, and $s$; but the triangle $\{x,g(x),u\}$ is metric and has the same side lengths.

Now the bijection $\tilde{g}$ of $A\cup B\cup \{u,v\}$ extending $g$ and interchanging $u$ and $v$ is an automorphism of $\MS$ because $\dist_{\MS}(u,x)=\dist(u,x)=\dist_{\MS}(v,g(x))$ for all $x\in A\cup B$.

Because $\US$ is homogeneous there exists an embedding $h$ of $\MS$ into $\US$ with $h(x)=x$ for all $x\in A\cup B\cup \{u\}$. By Lemma \ref{lem:lim0bl}, let $0<r\in \SetS$ be a jump number with $0<r<\mathrm{gap}(\{\dist(h(v),x) : x\in A\cup B\cup \{u\}\}$, and let $E$ be the $\stackrel{r}{\sim}$ equivalence class containing the point $h(v)$. If $i\in n$ is such that $h(v)\in A_i$, then choose $w\in A_i\cap E$; this is possible because $A_i$ is assumed to be dense. It follows from the choice of $r$ and from Fact \ref{fact:gap} that $\dist(w,x)=\dist(h(v),x)=\dist_{\MS}(v,x)$ for all $x\in A\cup B\cup \{u\}$.

The required automorphism $g'$ is simply the map corresponding to $\tilde{g}$ interchanging $u$ and $w$.
\end{proof}

\section{Distinguishing Number of Homogeneous Urysohn Metric Spaces}\label{dsec}

In this section, we show that the distinguishing number $\US$ is either 2 or infinite for any countable universal spectrum $\SetS$.
When it is 2, we will show this is so by decomposing $\US$ into a rigid subspace particularly constructed so that all automorphisms fixing this subspace also fix its complement.

% \subsection{Rigid Forest}
%RIGID FOREST
In all cases but one, the rigid subspace is made from a rigid forest. First we show how to use the graph structure of a metric space.

\begin{defin}\label{def:sdistgraph}
Consider a metric space $\MS$ with distances in $\SetS \subseteq \Reals$. For $s\in \SetS$, the {\em $s$-distance graph} of $\MS$ is the (simple) graph on the elements of $\MS$ (as vertices), and two vertices are adjacent if and only if their distance is $s$.
\end{defin}

This following observation will play a crucial role in building rigid subspaces.

\begin{obs*}
If the $s$-distance graph of a metric space is rigid, then the metric space is rigid.
\end{obs*}

\subsection{Basic Construction}

Here is the first such construction.

\begin{lem}\label{lem:unbninsin}
If  $\SetS$ be a universal spectrum, then the distinguishing number of $\US$ is 2 if there exists a positive number $s\in \SetS$ for which the following hold.
\begin{enumerate}
\item The element $s$ is not a jump number; that is, there exists a number $r\in \SetS$ with $s<r\leq s+s$.
\item For every positive $t\in \SetS$, there exist numbers $\{h_t,k_t\}\subseteq \SetS$ so that:
\begin{enumerate}
\item $s<h_t<k_t$.
\item $h_t+k_t\geq t\geq k_t-h_t$.
\end{enumerate}
\end{enumerate}
\end{lem}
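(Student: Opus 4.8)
The plan is to build, inside $\US$, a rigid subspace $R$ so that every automorphism of $\US$ that fixes $R$ pointwise is the identity; together with the Observation above (applied in the form: if we can $2$-colour $\US$ so that one colour class, with its induced metric, has a rigid $s$-distance graph, and the colouring already forces that class setwise, we are done) this yields $\dis(\US)=2$. Concretely, I would first use condition (1) to produce a concrete rigid graph that embeds as an $s$-distance graph. The natural candidate is a rigid \emph{tree} (or forest): pick a rigid tree $F$ on countably many vertices in which every vertex has degree at most some bound, and realise it as a metric space by declaring adjacent vertices to be at distance $s$ and then amalgamating (using Lemma~\ref{lem:amalbetter} repeatedly, with the fixed value $s$, since two vertices at graph-distance $\geq 2$ get distance $\geq s$ forced and $s \leq s+s$ by the non-jump hypothesis) to get a genuine metric space $\RS \in \mathfrak{A}(\SetS)$ whose $s$-distance graph is exactly $F$. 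Because $F$ is rigid, $\RS$ is rigid by the Observation.

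Next I would embed $\RS$ into $\US$ and let $A$ be its image, and then \emph{characterise $A$ among the points of $\US$} using a second colour class $B = \US \setminus A$ in such a way that no automorphism can move a point of $A$ into $B$ or conversely; this is where condition (2) does its work. The idea is that a colour class can be pinned down not just by which points it contains but by the pattern of distances realised between the two classes. Using the numbers $h_t, k_t$ from (2)(b) — which by (2)(a),(b) satisfy $s < h_t < k_t$ and $h_t + k_t \ge t \ge k_t - h_t$, exactly the triangle inequalities needed so that a triangle with side lengths $h_t, k_t, t$ is metric — I would further decorate $B$ so that every point of $\US$, and in particular every point of $B$, has its full distance-profile to $A$ determined, forcing $A$ to be invariant. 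More precisely: arrange (again by repeated amalgamation, controlling distances via Lemma~\ref{lem:amalbetter} and by choosing, for the distance $t$ of any external point to the relevant reference points of $A$, the pair $h_t, k_t$) that the colouring is \emph{rigid-on-$A$ and $A$-distinguishing}, i.e. the partition $(A,B)$ is preserved only by automorphisms fixing $A$, hence only by the identity.

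The remaining step is routine bookkeeping: check that the decorated structure is still a metric space with spectrum inside $\SetS$ — every new triangle either lies inside $\RS$, or involves the auxiliary distances $h_t, k_t$ and is metric precisely because of the inequalities in (2)(b), or is of the mixed type handled exactly as in the proof of Lemma~\ref{lem:amalbetter} — and that, as a substructure of the Fra\"iss\'e limit $\US$, it occurs (by universality, since it lies in $\mathfrak{A}(\SetS)$, which is an age with amalgamation by Theorem~\ref{thm:Frabas}) and extends to all of $\US$ by homogeneity, after which the partition it induces on its own domain extends to a $2$-colouring of $\US$ with the same distinguishing property. The main obstacle I anticipate is the middle step: making the colour class $A$ provably invariant. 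Rigidity of $\RS$ alone only kills automorphisms fixing $A$ \emph{setwise}; one must genuinely exploit (2) to rule out an automorphism that maps some $a \in A$ to some $b \in B$, and getting the interaction between "$A$ is rigid via the $s$-distance forest" and "$B$ is glued to $A$ via the $h_t,k_t$ pattern" to be simultaneously consistent (one metric space, one spectrum) is the delicate point — this is presumably why the hypothesis in~(2) quantifies over \emph{every} positive $t\in\SetS$, so that whatever distance a $B$-point realises to the forest can be matched by a suitable metric triangle forcing its colour.
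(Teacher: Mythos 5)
Your overall architecture is the paper's: build, by an inductive (back‑and‑forth style) construction inside $\US$, a subspace $\MS$ whose $s$-distance graph is a rigid forest (using condition (1) to realise a tree as the $s$-distance graph of a space with spectrum $\{s,r\}$, and Lemma~\ref{lem:amalbetter} plus homogeneity to control the amalgamations), and take the $2$-colouring to be $\MS$ versus its complement. That much is right. But you misidentify the role of condition (2), and the step you flag as "the main obstacle" is not actually the obstacle. Setwise invariance of the colour class $A$ is \emph{automatic}: any automorphism preserving the partition preserves each class, so rigidity of $A$ already forces the identity on $A$. There is nothing to rule out about "an automorphism mapping some $a\in A$ to some $b\in B$", and no "decoration of $B$" occurs. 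What is \emph{not} automatic, and what (2) exists for, is that an automorphism fixing $A$ pointwise must also fix $B$ pointwise: two distinct points $x,y\in B$ with identical distance profiles to $A$ could be swapped.

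The missing mechanism is the inductive step that prevents this. One enumerates all pairs of points of $\US$; given a pair $(x,y)$ at distance $t$ not yet separated by the current finite approximation $\MS$ (i.e.\ $\dist(x,z)=\dist(y,z)$ for all $z\in M$), one adjoins to $\MS$ a fresh metric space $\CS$ with spectrum $\{s,r\}$ whose $s$-distance graph is a rigid tree not isomorphic to any component already present, and attaches an endpoint $e$ of that tree to $x$ and $y$ with $\dist(x,e)=h_t$ and $\dist(y,e)=k_t$. The inequalities in 2(b) are exactly what make the triangle $\{x,y,e\}$ metric, and $h_t<k_t$ is what separates the pair; Lemma~\ref{lem:amalbetter} with threshold $r'=\min\{r,h_t\}>s$ guarantees all other new distances exceed $s$, so no unwanted $s$-edges appear and the $s$-distance graph remains a rigid forest (rigid because its components are pairwise non-isomorphic rigid trees). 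In the limit, $\MS$ is rigid and every pair outside it is separated by a point of $\MS$, which gives $\dis(\US)=2$. Without this enumeration-of-pairs step your argument does not close: you never establish that the identity on $A$ propagates to $B$.
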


\begin{proof}
We say that a set $P$ of pairs of points in $\US \setminus \MS$ is {\em stabilized by the subspace $\MS$ of $\US$} if for all $(x,y)\in P$:
\begin{enumerate}
\item There exists a point $z\in M$ with $\dist(x,z)\not=\dist(y,z)$.
\item The $s$-distance graph of $\MS$ is a rigid forest.
\end{enumerate}

The proof of the lemma is an inductive construction on the countable domain of $\US$, building such an $\MS$ as an increasing union of finite spaces such that any pair $(x,y) \in \US \setminus \MS$  is stabilized by the subspace $\MS$. We then have that the partition  of $\MS$ and its complement shows that the distinguishing number of $\US$ is 2. 

The construction of $\MS$ is a consequence of the following claim handling the inductive finite stages.

\begin{claim*} Let $P$ be a set of pairs of points in $\US$ which is stabilized by the finite subspace $\MS$ of $\US$, and let $\{x,y\}$ be two points in $\US \setminus \MS$. We then have that there exists a finite subspace $\NS$ of $\US$ containing $\MS$ and stabilizing $P'=P\cup \{(x,y)\}$.
\end{claim*}

To prove the claim, if there already exists a point $z\in M$ with $\dist(x,z)\not=\dist(y,z)$, then we can let $\NS=\MS$. Hence, we assume that $\dist(x,z)=\dist(y,z)$ for all $z\in M$.

Let $r\in \SetS$ with $s<r\leq s+s$. Let $\CS$ be a metric space with spectrum $\{s,r\}$ whose $s$-distance graph is a rigid tree $G$ which is not isomorphic to one of the trees of the $s$-distance graph of the space $\MS$. Let $e$ be an endpoint of the tree $G$. Let $\TS$ be the metric space with $T=\{x,y,e\}$ so that $\dist_\mathrm{T}(x,y)=t$ and $\dist_T(x,e)=h_t$ and $\dist_\mathrm{T}(y,e)=k_t$ as per the hypothesis. Then $\TS$ is indeed a metric space because $h_t+k_t\geq t\geq k_t-h_t$ and $h_t<k_t$. The pair of metric spaces $(\TS,\CS)$ forms an amalgamation instance. Now because $s<r$ and $s<h_t$, then $s< r'=\min\{r,h_t\}$. It follows from Lemma~\ref{lem:amalbetter} that there exists a metric space $\CS'\in \amalg_\SetS(\mathrm{T},\CS)$ with $\dist_{\CS'}(v,x)\geq r'$ and with $\dist_{\CS'}(v,y)\geq r'$ for all $v\in C$ and so that $\dist_{\CS'}(v,x)\leq \dist_{\CS'}(v,y)$ for every $v\in C$.

Now let $\MS'$ be the subspace of $\US$ induced by the set $M\cup \{x,y\}$ of points. The metric spaces $\MS'$ and $\CS'$ form an amalgamation instance. It follows from Lemma ~\ref{lem:amalbetter} again that there exists a metric space $\MS''\in \amalg_\SetS(\MS',\CS')$ so that $\dist_{\MS''}(v,z)\geq r'>s$ for every $v\in C$ and every $z\in M$.

Because $\US$ is homogeneous there exists an embedding $f$ of $\MS''$ into $\US$ with $f(z)=z$ for all $z\in M'$. Then the image $\NS$ of $\MS''$ under the embedding $f$, removing $x$ and $y$,  is as required.
\end{proof}

This yields the following case.

\begin{coroll}\label{cor:poslimno}
Let $\SetS$ be a universal spectrum. 
If $\SetS$ contains a positive number $s$ which is not a jump number, and has a limit $r$ (not necessarily in $\SetS$) with $s<r$, then the distinguishing number of $\US$ is 2.
\end{coroll}

\begin{proof}
By Lemma \ref{lem:unbninsin}, if suffices to show that for every positive $t\in \SetS$ there exist numbers $\{h_t,k_t\}\subseteq \SetS$ so that $s<h_t<k_t$, and $h_t+k_t\geq t\geq k_t-h_t$.

If $t\leq r$, then because $r$ is a limit point of $\SetS$ one can find the required numbers $s< h_t< k_t \in \SetS$ (close enough to $r$). If $t>r$, then choose $h_t \in \SetS$ close enough to $r$ so that $s<h_t<t$, and let $k_t=t$; again this is possible because $r$ is a limit point of $\SetS$.
\end{proof}

%%%%%%%%%%
% positive limit not necessarily in $\SetS$

The case of $\SetS$ having a positive limit in $\SetS$ can be handle in a similar manner, but the more general case of the positive limit not necessarily in $\SetS$ is more delicate. In that case the rigid forest will be replaced by a ``crab nest''.

 \subsection{Crab Nest}

%%%%%SPIDER and CRAB
In the more general situation of $\SetS$ having a limit point not in $\SetS$ and all points below are jump numbers, we may not be able to retain the connected components in the intended rigid $s$-graph, and therefore, we need a new structure.

First call a finite graph $S$ a {\em spider} if it is a tree which contains exactly one vertex, the {\em centre} of $S$, of degree larger than 2 and then all of the other vertices have degree 2 or are endpoints having degree one.

Two cliques $\CS$ and $\CS'$ of a graph $G$ are called {\em adjacent} if they are vertex disjoint, the order of one of them (say $\CS'$) is less than or equal to the order of the other ($\CS$), and there exists an injection $f$ of $V(\CS')$ to $V(\CS)$ so that a vertex $x\in V(\CS')$ is adjacent to a vertex $y\in V(\CS)$ if and only if $f(x)=y$.

A finite graph $G$ is a {\em crab} if there exists a rigid spider $S$ and an integer $n \geq 5$, the {\em heft} of $G$ denoted by $\mathrm{heft}(G)$, so that for the set $\mathcal{C}$ of maximal cliques of the graph $G$:
\begin{enumerate}
\item Every vertex of $G$ is a vertex of exactly one of the cliques in $\mathcal{C}$.
\item Exactly one of the cliques in $\mathcal{C}$, the {\em centre clique of $G$}, has order $n+1$ and all other cliques in $\mathcal{C}$ have order $n$.
\item If $\CS$ is the centre clique, then there exists for every vertex $x\in V(\CS)$ exactly one clique $\CS'\in \mathcal{C}$ which is adjacent to $\CS$ and for which no vertex in $V(\CS')$ is adjacent to $x$.
\item There exists a bijection $f$ of $\mathcal{C}$ to $V(S)$ which maps the centre clique of $G$ to the centre of $S$, and such that $\CS$ and $\CS'$ are adjacent if and only if $f(\CS)$ is adjacent to $f(\CS')$.
\end{enumerate}
Note that the degree of the centre of the spider $S$ is equal to $n+1$. A clique $\CS\in \mathcal{C}$ is an {\em end clique} of the crab $G$ if $f(\CS)$ is an endpoint of the spider $S$.

\begin{lem}\label{lem:crabrig}
Every crab $G$ is rigid.
\end{lem}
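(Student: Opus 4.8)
The plan is to show that any automorphism $\varphi$ of a crab $G$ must be the identity by tracking how $\varphi$ acts on the distinguished combinatorial data attached to $G$: the set $\mathcal{C}$ of maximal cliques, the incidence pattern among them, and the associated rigid spider $S$. First I would observe that an automorphism of $G$ permutes $\mathcal{C}$, since maximal cliques are preserved by any graph isomorphism. Moreover, $\varphi$ preserves clique orders, so it must fix the unique centre clique $\CS$ (the only one of order $n+1$) setwise, and permute the order-$n$ cliques among themselves. Since "adjacent" for cliques is defined purely graph-theoretically, $\varphi$ induces an automorphism of the "clique adjacency graph", which by condition (4) is isomorphic (via $f$) to the spider $S$; as $S$ is rigid, this induced automorphism is trivial, so $\varphi(\CS')=\CS'$ for every clique $\CS'\in\mathcal{C}$.

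Next I would upgrade "$\varphi$ fixes every clique setwise" to "$\varphi$ fixes every vertex". The key leverage is condition (3): for each vertex $x$ of the centre clique $\CS$ there is exactly one clique $\CS'_x$ adjacent to $\CS$ such that no vertex of $\CS'_x$ is adjacent to $x$. Since $\varphi$ fixes each $\CS'$ setwise and sends $x$ to some vertex $\varphi(x)\in V(\CS)$, the clique $\CS'_x$ is sent to $\CS'_{\varphi(x)}$; but $\varphi$ fixes $\CS'_x$, so $\CS'_x = \CS'_{\varphi(x)}$, and by the uniqueness in condition (3) the map $x\mapsto \CS'_x$ is injective on $V(\CS)$, forcing $\varphi(x)=x$ for all $x\in V(\CS)$. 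This pins down every vertex of the centre clique. Now I would propagate outward along the spider: for an order-$n$ clique $\CS'$ adjacent to $\CS$, the definition of "adjacent" gives an injection $V(\CS')\to V(\CS)$ recording exactly which cross-edges are present, and since $\varphi$ now fixes every vertex of $\CS$ and fixes $\CS'$ setwise, each vertex of $\CS'$ is determined by its (now fixed) neighbour in $\CS$ — more precisely, the vertex of $\CS'$ lying on the cross-edge to a given fixed vertex of $\CS$ is itself fixed, and the one remaining vertex of $\CS'$ (there is exactly one, since $|\CS'|=n$, $|\CS|=n+1$, and one vertex of $\CS$ has no cross-edge) is then also fixed as the unique non-identified vertex. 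Iterating this along the path/branch structure of the spider handles all cliques, since the spider is connected.

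The main obstacle I expect is the bookkeeping in the propagation step: I need to be careful that the injective "cross-edge" correspondence from condition (3)/the adjacency definition genuinely lets me identify each vertex of a peripheral clique once its neighbouring clique is pointwise fixed, including correctly handling the centre clique's extra vertex and the "no vertex adjacent to $x$" clause, and that the order $n\geq 5$ hypothesis is actually used (it rules out small-clique coincidences where the adjacency injection could fail to be rigid — e.g. the cross-edge pattern must be enough to distinguish vertices, and degree considerations around the centre must not collapse). I would also double-check that within a single clique $\CS'$ the internal automorphisms are killed: a priori $\varphi$ restricted to $\CS'$ could be any permutation of its vertices, and it is precisely the cross-edges to the already-fixed adjacent clique (plus the "exactly one clique adjacent with no edge to $x$" structure near the centre) that rigidifies it. Concretely I would induct on the distance in $S$ from the centre vertex: the base case is the centre clique (handled above), and the inductive step fixes a clique $\CS'$ whose unique neighbour-clique closer to the centre is already pointwise fixed, using the adjacency injection to transport fixedness across, then noting the single leftover vertex of $\CS'$ is forced. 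Since every vertex of $G$ lies in exactly one clique of $\mathcal{C}$ by condition (1), this shows $\varphi=\mathrm{id}$, completing the proof.
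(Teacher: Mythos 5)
Your proof is correct and follows essentially the same route as the paper's: fix all cliques of $\mathcal{C}$ setwise via the order count and the rigidity of the spider, pin down the centre clique pointwise using condition (3), and propagate outward along the cross-edge injections by induction on distance from the centre. Two harmless slips worth noting: the adjacency injection goes from the smaller clique \emph{into} the larger one, so every vertex of a clique adjacent to the centre lies on a cross-edge (the unmatched vertex sits in the centre clique, and there is no ``leftover'' vertex of $\CS'$ to handle separately); and the injectivity of $x\mapsto\CS'_x$ comes from each adjacent clique missing exactly one vertex of the centre clique rather than from the uniqueness clause of (3) itself --- neither affects the argument.
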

\begin{proof}
Let $G$ be a crab with associated spider $S$, heft $n$ and $\mathcal{C}$ the set of maximal cliques. Let $g$ be an automorphism of $G$. Then $g$ induces a permutation $\mathcal{C}$, and fixes the centre clique $\CS$ because it is the only one of size $n+1$; hence, $g$ induces an automorphism of its associated spider and because the latter is assumed to be rigid then $\mathcal{C}$ are actually fixed.

We claim that $g$ is the identity map. First let $x\in V(\CS)$, and assume that $x\not=f(x)$. There exists a (unique) clique $\CS'$ which is adjacent to the clique $\CS$ and which does not contain a vertex which is adjacent to $f(x)$. We then have that $V(\CS')$ contains a vertex $x'$ which is adjacent to $x$, implying that $f(x')\not\in V(\CS')$, a contradiction. Hence, $f(x)=x$ for all $x\in V(\CS)$, implying inductively on the distance from the centre that $f(x)=x$ for all $x\in V(G)$.
\end{proof}

The rigid subspaces we are looking for will be build of crabs into what we call crab nests.

\begin{defin}\label{defin:crabnest}
A graph $G$ is a {\em crab nest} if it has a subgraph $H$ on the same vertices as $G$, the {\em crab graph} of $G$, together with an enumeration of its connected components $\{H_i, i\in I\}$ for $I=\omega $ or $I=n\in \omega$, and if there exists a set $R\subseteq V(G)$, the {\em distinguished endpoint set} of $G$, so that for every $i\in I$:
\begin{enumerate}
\item $H_i$ is a crab, and an induced subgraph of $G$.
\item $\mathrm{heft}(H_i)+2<\mathrm{heft}(H_{i+1})$.
\item $R$ contains exactly one vertex $r_i \in V(H_i)$, and this vertex is a vertex of an end clique of $H_i$.
\item If $(x,y)\in E(G)\setminus E(H)$, then the following hold.
\begin{enumerate}
%\item $\{x,y\}\not\subseteq V(H_i)$.
\item If $x\in V(H_i)$ and $y\in V(H_j)$ for some $j<i$, then $x=r_i$. \label{Item:4b}
\item If $x=r_i$, $(r_i,z)\in E(G)\setminus E(H)$ and $z\in H_k$ with $k<i$, then $y=z$. \label{Item:4c}
\end{enumerate}
\end{enumerate}
\end{defin}

\begin{lem}\label{lem:spidwrig}
Every crab nest is rigid.
\end{lem}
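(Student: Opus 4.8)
The plan is to show that any automorphism $g$ of a crab nest $G$ must fix each connected component $H_i$ of the crab graph $H$ setwise, and then invoke Lemma~\ref{lem:crabrig} componentwise. The key structural fact to exploit is that the hefts of the crabs $H_i$ are strictly increasing (indeed with gaps of size at least $2$ by item~(2) of Definition~\ref{defin:crabnest}), so the components of $H$ have pairwise distinct sizes; since the edges of $H$ are recoverable from $G$ in a way that $g$ respects, $g$ cannot mix two components of different sizes. First I would argue that $E(H)$ is definable from $E(G)$ in an automorphism-invariant way: the extra edges in $E(G)\setminus E(H)$ all emanate from the distinguished endpoints $r_i$ and go ``downward'' to components of smaller index (item~(4)), so each $r_i$ has at most one such extra neighbour, and every other vertex of $H_i$ has all of its $G$-neighbours inside $H_i$. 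Thus a vertex of $G$ lies in a crab component of a given heft, and the heft is read off from the sizes of the maximal cliques through that vertex — a quantity preserved by $g$. Hence $g$ permutes $\{H_i : i\in I\}$ preserving heft, and since hefts are distinct, $g[H_i]=H_i$ for every $i$.

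The next step is to observe that, because each $H_i$ is an \emph{induced} subgraph of $G$ (item~(1)), the restriction $g\restriction V(H_i)$ is an automorphism of the graph $H_i$ itself, not merely of its vertex set inside $G$; by Lemma~\ref{lem:crabrig} this restriction is the identity. Doing this for all $i\in I$ shows $g$ is the identity on $V(G)=\bigcup_i V(H_i)$, which is exactly rigidity.

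A point that needs care — and which I expect to be the main obstacle — is making the recovery of $E(H)$ from $E(G)$ genuinely rigorous, i.e. showing that the ``downward cross edges'' cannot be confused with edges of some crab. Concretely, one must check that adding the cross edge $(r_i,z)$ with $z\in H_k$, $k<i$, does not create a maximal clique that could masquerade as a crab clique (which would corrupt the clique-size computation used to read off hefts): since $r_i$ lies in an end clique of $H_i$ and $z$ in an end clique of $H_k$, and a single edge between two cliques of orders $\ge n\ge 5$ forms no triangle with either, the maximal cliques of $G$ are precisely those of $H$ together with, possibly, the single edges $\{r_i,z\}$ when $r_i$ and $z$ each already had no $H$-neighbour of the other — and these singleton-edge cliques have order $2<5\le n$, so they are never mistaken for crab cliques. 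Once this is nailed down, the heft of the component containing a vertex $v$ is unambiguously the common order (minus one, at the centre) of the large maximal cliques reachable from $v$ within $H$, and the argument of the first paragraph goes through. The remaining verifications (that item~(4) really does confine all extra edges as claimed, and that the induced-subgraph hypothesis transfers rigidity) are routine.
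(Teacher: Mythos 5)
Your proof is correct in substance, but it takes a genuinely different route from the paper's. The paper argues by contradiction and induction on the index: assuming some vertex of $H_i$ with $i>0$ is mapped into $V(H_0)$, it considers the maximal clique $\CS$ of $H_i$ through that vertex, uses items \ref{Item:4b} and \ref{Item:4c} of Definition~\ref{defin:crabnest} to show that the image of $V(\CS)$ cannot meet more than one component (otherwise a single distinguished endpoint would carry two downward cross edges), concludes that $f[V(\CS)]$ lies inside a single clique of $H_0$, and derives a contradiction from $\mathrm{heft}(H_i)>\mathrm{heft}(H_0)+2$; rigidity of the crab $H_0$ and induction on $I$ then finish. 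You instead make the component of each vertex an explicit automorphism invariant: the maximal cliques of $G$ of order at least $3$ are exactly the crab cliques of the $H_i$, each vertex lies in exactly one of them, and since consecutive hefts differ by more than $2$ the sets of possible orders $\{\mathrm{heft}(H_i),\mathrm{heft}(H_i)+1\}$ are pairwise disjoint, so the order of that clique determines $i$. Hence $g[V(H_i)]=V(H_i)$, each $H_i$ is an induced subgraph, and Lemma~\ref{lem:crabrig} applies componentwise. Your version avoids the induction and works uniformly for $I=\omega$; the price is the verification that no edge of $E(G)\setminus E(H)$ lies in a triangle of $G$, which you correctly identify as the crux.

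Two details need repair. First, your assertion that ``every other vertex of $H_i$ has all of its $G$-neighbours inside $H_i$'' is false: item \ref{Item:4b} only forces the endpoint of a cross edge lying in the \emph{higher}-indexed component to be $r_i$, so a non-distinguished vertex $z\in V(H_k)$ may receive cross edges from $r_i$ for arbitrarily many $i>k$. Fortunately your final argument never uses this claim, and you do not need to recover $E(H)$ or speak of reachability ``within $H$'' at all: the unique maximal clique of order at least $3$ through $v$ already determines the component of $v$. Second, ruling out triangles through cross edges requires slightly more than ``a single edge between two cliques forms no triangle with either'': you must also exclude a triangle consisting of a cross edge $(r_i,z)$ together with an edge $(r_i,w)$ of $H_i$ (then $(w,z)$ would be a cross edge leaving $H_i$ at a vertex $w\neq r_i$, contradicting item \ref{Item:4b}), and a triangle whose two cross edges share a distinguished endpoint (two downward cross edges out of one $r_m$, contradicting item \ref{Item:4c}). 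With those cases written out your argument is complete.
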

\begin{proof}
Let $G$ be a crab nest with crab graph $H$, the corresponding enumeration $\{H_i, i\in I\}$ of the connected components of $H$, corresponding endpoint set $R$. Let $f$ be an automorphism of $G$ and we show that $f$ must be the identity.

Assume that there exists a vertex $x\in V(H_i)$ with $i>0$ such that $f(x)\in V(H_0)$. Let $\CS$ be the maximal clique (thus, of size at least three) of $H_i$ with $x\in V(\CS)$. Let
\[
j=\max\{k\in I : \text{ for some } y\in V(\CS) ,  f(y)\in V(H_j)  \}.
\]
Assume that $j>0$. Note that if $y\in V(\CS) $ is such that $ f(y)\in V(H_j)$, then $(f(y),f(x) )\in E(G)\setminus E(H)$, and thus, by item (\ref{Item:4b}) there can be only one such element $y$ and we must have $f(y)=r_j$. We then have that $f(z)\in H_k$ for some $k<j$ for all other $z\in V(\CS)$, now contradicting to item (\ref{Item:4c}) because we would have both $(f(y),f(x))$ and $(f(z),f(x))$ in $E(G)\setminus E(H)$.

It follows that $j=0$. That is, the automorphism $f$ maps every element of $V(\CS)$ into $V(H_0)$. But every triangle of $H_0$ is in one of the cliques of $H_0$. Implying that $f$ maps the clique $\CS$ into a clique of $H_0$. But this is not possible because $\mathrm{heft}(H_i)>\mathrm{heft}(H_0)+2$. It follows that $f$ does not map any vertex of $V(G)\setminus V(H_0)$ into $V(H_0)$. Implying that if $x\in V(H_0)$, then $f(x)\in V(H_0)$.

This in turn implies, because the crab $H_0$ is rigid, that $f(x)=x$ for all $x\in V(H_0)$. Then via induction on the index set $I$ it follows that $f(x)=x$ for all $x\in V(G)$.
\end{proof}

 We are now ready to handle the case of $\SetS$ having a positive limit not necessarily in $\SetS$.

\begin{lem}\label{lem:poslimno2}
Let $\SetS$ be a universal spectrum. If $\SetS$ has a positive limit (not necessarily in $\SetS$), then the distinguishing number of $\US$ is 2.
\end{lem}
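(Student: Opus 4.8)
The plan is to reduce, via Lemma~\ref{lem:poslimno}, to a single remaining configuration, and then to run essentially the inductive construction from the proof of Lemma~\ref{lem:unbninsin}, but with the rigid subspace built as a rigid \emph{crab nest} inside an $s$-distance graph rather than as a rigid forest.

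First I would strip away everything Lemma~\ref{lem:poslimno} already covers. If some positive non-jump number of $\SetS$ lies strictly below a positive limit of $\SetS$, then Lemma~\ref{lem:poslimno} finishes the proof; and whenever a positive limit $r$ is approached from below this is the case, since any $a\in\SetS$ with $\tfrac12 r<a<r$ is not a jump number (some element of $\SetS$ lies in $(a,r)\subseteq(a,2a]$) and sits below the limit $r$. Moreover, if every positive element of $\SetS$ below a positive limit is a jump number, then every positive limit is approached only from above, and one checks there is then exactly one positive limit, say $r$. So I may assume $r$ is the unique positive limit, approached only from above, with $\SetS\cap(0,r)$ consisting of jump numbers --- precisely the situation the crab constructions of Lemmas~\ref{lem:crabrig} and~\ref{lem:spidwrig} were built for. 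Enumerate $\SetS\cap(r,a_0)$, for some $a_0\in\SetS$ with $a_0>r$, in decreasing order as consecutive elements $a_0>a_1>a_2>\cdots$ of $\SetS$ with $a_n\to r$; since $a_{n-1}/a_n\to 1$, pick $N$ with $a_{N-1}\le 2a_N$ and set $s:=a_N$ and $r':=a_{N-1}$, so that $s$ is not a jump number and $s<r'\le 2s$.

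Two observations then drive the construction. First, every crab of any heft embeds in $\US$ with spectrum $\{s,r'\}$: put distance $s$ inside each clique and along each matched pair of adjacent cliques, and distance $r'$ on all other pairs; every triangle then has side-multiset among $\{s,s,s\},\{s,s,r'\},\{s,r',r'\},\{r',r',r'\}$ and so is metric because $s<r'\le 2s$, and the $s$-distance graph of the result is exactly the prescribed crab. Second, for every positive $t\in\SetS$ there are distinct $h,k\in\SetS$ with $|h-k|\le t\le h+k$: when $t\le 2r$ take a sufficiently close consecutive pair among the $a_m$ near $r$, and when $t>2r$ take $k=t$ together with $h=a_m$ for $m$ large. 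Since $\SetS$ accumulates at $r$, at least one of $h,k$ may be taken arbitrarily close to $r$, which leaves ample room inside $\SetS$ for the amalgamations below. Now I would run the induction as in the Claim inside the proof of Lemma~\ref{lem:unbninsin}. Enumerating the pairs of $\US$, I build an increasing chain of finite subspaces $\MS_0\subseteq\MS_1\subseteq\cdots$ with crab-nest $s$-distance graphs so that each pair is eventually distinguished by some point of the chain. At the step treating a pair $(x,y)$ not already distinguished by the current $\MS$, I adjoin a fresh crab $H$ of heft more than $2$ above every heft used so far, pick a vertex $e$ of one of its end cliques, and --- using the triangle $\{x,y,e\}$ from the second observation, Lemma~\ref{lem:amalbetter} to push all genuinely new distances above $s$, and homogeneity of $\US$ to re-embed fixing $\MS$ and $x,y$ --- pass to $\MS\cup(\text{image of }V(H))$, arranged so that $H$ becomes a new crab of the crab nest attached to the rest only through $e$. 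Then the image of $e$ distinguishes $(x,y)$ (its distances to $x$ and $y$ are $h\ne k$), while $e$ in the role of a distinguished endpoint of $H$ is allowed at most one $s$-edge back into the earlier configuration --- exactly the flexibility Definition~\ref{defin:crabnest} was written to provide.

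Writing $W$ for the union of the chain, Lemma~\ref{lem:spidwrig} makes the $s$-distance graph of $W$ a rigid crab nest, so $W$ is rigid; and since every pair of $\US$ is distinguished by a point of $W$, the only automorphism of $\US$ fixing $W$ pointwise is the identity. Colouring $W$ with one colour and $\US\setminus W$ with the other, any colour-preserving automorphism must fix $W$ setwise, hence, being an isometry of the rigid space $W$, pointwise; so $\dis(\US)=2$. The step I expect to fight with is the inductive one: after adjoining the new crab and performing the amalgamations one must check that all clauses of Definition~\ref{defin:crabnest} survive --- in particular the restrictions on which vertices carry edges of $E(G)\setminus E(H)$ and how many --- while every distance stays in $\SetS$. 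The room for this is precisely what was arranged: $s$ is not a jump number, so $r'\in(s,2s]$ is available for the non-edges of the crabs, and $r$ is a genuine limit of $\SetS$, so arbitrarily close admissible side lengths exist for the attaching triangles; the remainder is the same bookkeeping as in the proof of Lemma~\ref{lem:unbninsin}.
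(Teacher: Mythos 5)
Your reduction to the case of a unique positive limit $r$ approached only from above, with every element of $\SetS_{>0}$ below $r$ a jump (hence insular) number, is correct and matches the paper, as does the overall architecture: build an increasing chain of finite subspaces whose $s$-distance graphs are crab nests, with each new pair distinguished by the distinguished endpoint of a freshly attached crab, and conclude via Lemma~\ref{lem:spidwrig} and a two-colouring. The choice of $\{s,r'\}$ with $s<r'\le 2s$ both in $\SetS$ for the crab's spectrum, and the selection of $h,k$ for the attaching triangle, are also fine.

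The gap is in the step you yourself flag and then wave away as ``the same bookkeeping as in the proof of Lemma~\ref{lem:unbninsin}.'' It is not, and your stated mechanism --- ``Lemma~\ref{lem:amalbetter} to push all genuinely new distances above $s$'' --- fails exactly where the crab nest is needed. When $t=\dist(x,y)<r$ and some $z\in M$ satisfies $\dist(x,z)=p<r$, the distance from the new endpoint $e$ to $z$ is squeezed into $[h-p,\,h+p]$ with $h$ close to $r<s$; it cannot in general be pushed above $s$, and it may land exactly on $s$, creating an edge of $E(G)\setminus E(H)$. Lemma~\ref{lem:amalbetter} only guarantees a uniform lower bound and gives no control over \emph{which} new distances equal that bound, so a priori many vertices of the new crab could acquire $s$-edges into $M$, destroying clauses (4a)--(4b) of Definition~\ref{defin:crabnest}. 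The paper's proof needs three ingredients you omit: (i) the maintained invariant that all pairwise distances inside the stabilizing subspace are at least $r$; (ii) the exact $\oplus$-amalgamation of Lemma~\ref{lem:oplusamalg} (available because $\SetS$ is inversely well ordered here), which yields $\dist(v,z)=\min_w \dist(v,w)\oplus\dist(w,z)\ge\min_w\dist(v,w)>s$ for every $v\ne e$ and pins down $\dist(e,z)=h\oplus\dist(x,z)$; and (iii) the jump-number computation $p\oplus q=\max\{p,q\}<r$ showing that two distinct points $z,w\in M$ cannot both lie at insular distance $<r$ from $x$ (else the triangle $\{x,z,w\}$ would violate Lemma~\ref{lem:axxoc2} against invariant (i)), whence $e$ acquires at most one $s$-edge back into $M$, which is precisely what clause (4c) tolerates. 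Without (i)--(iii) the verification that the enlarged $s$-distance graph is still a crab nest does not go through, so the proposal is missing the essential content of the lemma rather than routine bookkeeping.
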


\begin{proof}
If $\SetS$ has a limit $r$ (not necessarily in $\SetS$) and one can find a non-jump number $s<r$, then Corollary
\ref{cor:poslimno} applies. This is the case if $\SetS$ has two positive limits $r'<r$, in which case one can find such an $s$ close to $r'$. Similarly, this is the case if  $r$ is a limit of the elements of $\SetS$ below $r$.

Hence, we may assume that $\SetS$ has only one positive limit $r$, every number in $\SetS$ less than $r$ is insular, and the elements of $\SetS$ above $r$ form a (possibly two way) sequence converging to $r$. In particular this means that every non-empty and bounded above subset of $\SetS$ has a maximum, that is $\SetS$  is inversely well ordered, and thus, the operation $\oplus$ is defined for $\SetS$.

\medskip
We are now ready to undertake the construction of the rigid subspace using a rigid $s$-graph, where $s$ will be chosen close enough to $r$ and such that $r<s<r+r$. That space and its complement will show that the distinguishing number of $\US$ is 2.

For the sake of this proof, we say that a set $P$ of pairs of points in $\US \setminus \MS$ is {\em stabilized} by the subspace $\MS$ if the following properties hold.
\begin{enumerate}
\item The $s$-distance graph of $\MS$ is a crab nest $G$ with crab graph $H$ and enumeration $H_0,H_1,H_2,\dots$ of the connected components of $H$ and distinguished endpoint set $R$ of $G$.
\item If $u\not=v$ are two points of $\MS$, then $\dist(u,v) \geq r$.
\item For every pair $(x,y)\in P$, there exists a crab $H_i$ and point $r_i\in R\cap H_i$ such that
$\dist(x,r_i)\not=\dist(y,r_i)$. % and $\max\{ \dist(x,r_i), \dist(y,r_i)\} <r+r$.
\end{enumerate}

\noi The proof of Lemma \ref{lem:poslimno2} is an inductive construction on the countable domain of $\US$, and is a consequence of the following claim handling the inductive step.

\begin{claim*} Let $P$ be a set of pairs of points in $\US$ which is stabilized by the finite subspace $\MS$ of $\US$. Let $\{x,y\}$ be two points in $\US \setminus \MS$ such that $\dist(x,y)=t>0$, and $\dist(x,z)=\dist(y,z)$ for all $z\in M$. We then have that  there exists a finite subspace $\NS$ of $\US$ containing $\MS$ and stabilizing $P'=P\cup \{(x,y)\}$.
\end{claim*}

To prove the claim, let $G$ be the $s$-distance graph of $\MS$ with crab graph $H$ and enumeration $\{H_i: i\in n\}$ of the connected components of $H$ and distinguished endpoint set $R$.

If $t<r$, then let $r<h_t<k_t$ be two numbers in $\SetS$ (close to $r$) with $k_t-h_t\leq t$. Otherwise, consider $s'<s'' \in \SetS$ such that $r<s<s'<s''<r+r$ (this is why we picked $s$ close enough to $r$). If $r\leq t<r+r$, then let $h_t=s'$ and $k_t=s''$; if $r+r \leq t$, then let $h_t=s'$ and $k_t=t$.

Let $\CS$ be a metric space with spectrum $\{r,s\}$ whose $s$-distance graph is a crab $H_n$ for which $\mathrm{heft}(H_{n-1})+2<\mathrm{heft}(H_n)$. Let $r_n$ be a vertex of an end clique of the crab $H_n$ and let $R'=R\cup \{r_n\}$. Let $\TS$ be the metric space with $T=\{x,y,r_n\}$ so that $\dist_\mathrm{T}(x,y)=t$ and $\dist_T(x,r_n)=h_t$ and $\dist_\mathrm{T}(y,r_n)=k_t$. Note that $\TS$ is in all cases indeed a metric space. The pair of metric spaces $(\TS,\CS)$ forms an amalgamation instance. It follows from Lemma~\ref{lem:amalbetter} and from $s' \leq r+r \leq r +h_t <r +k_t$ that there exists a metric space $\CS'\in \amalg_\SetS(\TS,\CS)$ with
\begin{align}\label{E:1}
\text{$\dist_{\CS'}(v,x)\geq s'$ and $\dist_{\CS'}(v,y)\geq s'$ for all $r_n\not=v\in C$} \tag{i.}
\end{align}
and so that $\dist_{\CS'}(v,x)\leq \dist_{\CS'}(v,y)$ for every $v\in C$.

%If $t\geq r$ then $r\leq h_t<k_t$ and it follows from Lemma~\ref{lem:amalbetter} that:
%\begin{align} \label{E:2}
%s<r\leq \dist_{\CS'}(e,x)<\dist_{\CS'}(e,y). \tag{ii.}
%\end{align}

 Now let $\MS'$ be the subspace of $\US$ induced by the set $M\cup \{x,y\}$. The metric spaces $\MS'$ and $\CS'$ form an amalgamation instance, and let $\MS''= \amalg^\oplus_\SetS(\MS',\CS')$ provided by Lemma \ref{lem:oplusamalg}. We then have that for $v\in C$ and $z \in M$:
 \begin{align}\label{E:3}
 \dist_{\MS''}(v,z) \geq \dist_{\CS'}(v,x)\oplus \dist(x,z)\geq \dist_{\CS'}(v,x). \tag{iii.}
 \end{align}
 Hence, for $v\in C$ and $z\in M$, if $t\geq r$ or if $v\not=r_n$, then
 \begin{align} \label{E:4}
\dist_{\MS''}(v,z)\geq s' >s, \tag{iv.}
 \end{align}
and if $t<r$, then
 \begin{align}\label{E:5}
 \dist_{\MS''}(r_n,z)=h_t\oplus \dist(x,z). \tag{v.}
 \end{align}

We claim that the $s$-distance graph of $\MS'' \setminus \{x,y\}$ is a crab nest $G''$ with distinguishing endpoint set $R'$. The crab graph $H''$ of $G''$ is the crab graph $H$ of $G$ together with the additional connected component $H_n$. Item (4) of Definition \ref{defin:crabnest} remains to be verified. Let $(z,v) \in E(G'')\setminus E(H'')$.  It follows  that  $z$ and $v$ are two points of $\MS''$ with $\dist_{\MS''}(z,v)=s$. If $\{z,v\}\subseteq M$, then Item (4) of Definition \ref{defin:crabnest} will be satisfied. If $\{z,v\}\subseteq C$, then $\{z,v\}\in E(H_n)$ because the spectrum of $\CS$ is $\{s,r\}$ and the $s$-distance graph of $\CS$ is the crab $H_n$. Hence, we may assume that $v\in C$ and $z\in M$. If $t\geq r$ or if $v\not=r_n$ it follows from \ref{E:4} above that $\dist_{\MS''}(v,z)\geq s'>s$.

Thus, $t<r$ and $v=r_n$. If $r\leq \dist(x,z)=\dist(y,z)$, then $\dist_{\MS''}(r_n,z)=h_t\oplus \dist(x,z)\geq r\oplus r \geq s'>s$. If $p=\dist(x,z)<r$, then it is possible that $\dist_{\MS''}(r_n,z)=h_t\oplus \dist(x,z)=h_t\oplus p=s$. But now assume that $q=\dist(x,w)<r$ for a point $w\in M$. We then have that  $p$ and $q$ are insular numbers of $\SetS$ and both smaller than $r$. If $z \not=w$, then $\dist(z,w)\geq r$ by assumption, which is a contradiction because $p\oplus q=\max\{p,q\}<r$ because they are insular points. Hence, $z=w$ verifying Item (4c) of Definition \ref{defin:crabnest} and $G''$ is a crab nest.

The space $\MS''$ is not a subspace of $\US$ but otherwise meets the conditions for stabilizing the set $P\cup \{(x,y)\}$. Since $\US$ is homogeneous, there exists an embedding $f$ of $\MS''$ into $\US$ with $f(a)=a$ for all $a\in M'$. Then the image $\NS$ of $\MS''$ under the embedding $f$ is as required to prove the claim.
\end{proof}

%%%%end of crab

%%%%%%%%%%
% Remaining Cases

\subsection{The Remaining Cases}

There are a few remaining cases to handle, made possible from previous results and techniques.

\begin{lem}\label{lem:boundbel}
Let $\SetS$ be a universal spectrum. If there exist numbers $a <b \in \SetS$ with $b-a\leq \inf\SetS_{>0}$, then the distinguishing number of $\SetS$ is 2.
\end{lem}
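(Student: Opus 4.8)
The plan is to reduce to the already-established Lemma~\ref{lem:unbninsin} by producing, from the single hypothesis $b-a\le\inf\SetS_{>0}$ with $a<b$ in $\SetS$, a positive non-jump number $s\in\SetS$ together with the witnessing pairs $\{h_t,k_t\}$ required in item (2) of that lemma. First I would record the basic consequences of the hypothesis: writing $m=\inf\SetS_{>0}$, every two distinct positive elements of $\SetS$ differ by at least $m$ (otherwise their cover, or rather the difference, would contradict minimality — more precisely, if two positive elements were at distance $<m$ we would violate $b-a\le m$ being the \emph{smallest} such gap only if that were forced; what we actually use is that $b-a\le m$ forces $b\le a+m\le a+a$ whenever $a\ge m$, and in fact $a\ge m$ always since $a\in\SetS_{>0}$). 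Thus $b\le a+a$, so $a$ is \emph{not} a jump number of $\SetS$: we take $s=a$, and $r=b$ witnesses $s<r\le s+s$, giving item (1) of Lemma~\ref{lem:unbninsin}.

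Next I would verify item (2): for every positive $t\in\SetS$ we need $\{h_t,k_t\}\subseteq\SetS$ with $s<h_t<k_t$ and $h_t+k_t\ge t\ge k_t-h_t$. The natural candidates are a suitable pair drawn from $\SetS_{>a}$. If $\SetS_{>a}$ contains two elements $h<k$ with $k-h\le t$, we are done provided also $h+k\ge t$; since $h,k>a\ge m>0$ we have $h+k>2a\ge b>a$, and one arranges $h+k\ge t$ by taking $k=t$ itself when $t>a$ (so $t\in\SetS_{>a}$), choosing $h\in\SetS$ with $a<h<t$ and $t-h\le t$ (automatic), which needs $h+t\ge t$ (automatic). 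The genuinely delicate point is guaranteeing such an $h$ exists strictly between $a$ and $t$, i.e.\ that $\SetS$ is not too sparse above $a$ — and here the hypothesis is used again together with the 4-values condition: from $a<b$ with $b-a\le m$ one amalgamates triangles to manufacture, for each target $t$, elements of $\SetS$ in the interval $(a,t)$, or alternatively one checks that if no such $h$ existed then $a$ and $t$ would be consecutive with $t-a>m$, and applying the 4-values condition to the triangle with sides $\{a,b,c\}$ (where $c$ is the cover of $\{a,t\}$) against $\{a,t,c\}$ produces a forbidden distance exactly as in the proof of Lemma~\ref{lem:jump1cl}. When $t\le a$ the construction is easier: one still needs two elements of $\SetS$ above $a$ that are close together, which is supplied directly by amalgamating the $\{a,b\}$-configuration with itself to push a copy of $b$ up near $2a$, yielding consecutive-ish elements above $a$ at distance $\le m\le b-a\le t$ is false in general, so instead one uses $h_t,k_t$ near a common value: take any $k\in\SetS_{>a}$ (such as $b$) and produce $h\in\SetS$ with $a<h<k$ and $k-h\le t$ via the 4-values condition applied to the triangle $\{h',k,\text{cover}\}$ construction.

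I expect the main obstacle to be exactly this density argument above $a$: showing that the hypothesis $b-a\le\inf\SetS_{>0}$, which at first glance only says something about \emph{one} pair, actually propagates via the 4-values condition to give arbitrarily good approximating pairs $\{h_t,k_t\}$ above $a$ for \emph{every} target $t$. The clean way to handle it is to prove a small auxiliary claim first — that under this hypothesis $\SetS$ has no jump number $\ge a$ other than possibly $\max\SetS$, equivalently $\SetS_{\ge a}$ is ``block-like'' — using Lemma~\ref{lem:jump1cl} and the cover machinery, and then read off item (2) of Lemma~\ref{lem:unbninsin} directly from the absence of jumps in $[a,\infty)\cap\SetS$. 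Once that structural fact is in hand, the rest is a routine case split on whether $t\le a$, $a<t<2a$, or $t\ge 2a$, in each case exhibiting $h_t,k_t$ explicitly, and then Lemma~\ref{lem:unbninsin} gives $\dis(\US)=2$.
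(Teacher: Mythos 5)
Your opening move is the same as the paper's: from $a\ge\inf\SetS_{>0}$ you get $b\le a+a$, so $a$ is not a jump number, and you aim for Lemma~\ref{lem:unbninsin} with $s=a$, $r=b$. But the step you yourself flag as delicate is where the proposal breaks. Lemma~\ref{lem:unbninsin} requires the \emph{strict} inequality $s<h_t<k_t$, and the hypothesis $b-a\le\inf\SetS_{>0}$ does \emph{not} propagate upward to give close pairs strictly above $a$. Concretely, $\SetS=\{0,1,2\}$ is a universal spectrum (in any amalgamation instance of two triangles the value $2$ always works as the new distance) satisfying the hypothesis with $a=1$, $b=2$, $p=1$; yet $\SetS_{>a}=\{2\}$ is a singleton, so for no positive $t$ does a pair $a<h_t<k_t$ exist, and indeed Lemma~\ref{lem:unbninsin} is inapplicable to this spectrum for \emph{any} choice of $s$ ($s=2$ is a jump number, and $s=1$ leaves nothing above it to choose from). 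Your fallback arguments do not rescue this: the "if no such $h$ existed then $a$ and $t$ would be consecutive with $t-a>m$" contradiction never fires here because $t-a=m$ exactly, and the 4-values condition cannot manufacture elements of $(1,2)$ that are simply not in the spectrum. So the auxiliary "density above $a$" claim you propose to prove is false, and the reduction to Lemma~\ref{lem:unbninsin} cannot be completed in the boundary case $s=a$.

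The paper handles exactly this case differently: after disposing of the positive-limit case via Lemma~\ref{lem:poslimno2} and the case $s<a$ (where $h_t=a$, $k_t=b$ do satisfy the strict inequality and Lemma~\ref{lem:unbninsin} applies), it does \emph{not} cite Lemma~\ref{lem:unbninsin} when $s=a$. Instead it reruns the entire stabilization/rigid-forest induction from scratch with $h_t=s$ permitted, and compensates for the lost strict inequality by amalgamating with $\amalg^\oplus_\SetS$ and using $r-s\le p$ to get $\dist_{\MS''}(v,z)\ge s\oplus\dist(x,z)\ge r>s$, which is what keeps the $s$-distance graph of the growing rigid set a forest. That modification of the construction itself, rather than a cleverer choice of $h_t,k_t$, is the missing idea.
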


\begin{proof}
Let $p=\inf \SetS_{>0} \geq b-a,$ where $a<b \in \SetS$. If $\SetS$ has a positive limit, then the distinguishing number of $\US$ is two according to Lemma \ref{lem:poslimno2}. Thus, we assume that $\SetS$ does not have a positive limit. In particular, $p\in \SetS$.

Moreover, the set $\SetS$ must contain an initial non-jump number. For otherwise, because $a < b\leq p+a\leq a+a=2a$ and thus, $a$ is non-jump number and must be a non-initial numbers. We then have that there is $a_1 \in \SetS \cap [a/2,a]$, a non-jump number which for the same reason must be a non-initial number. Continuing in this manner yields a positive limit in $\SetS$, a contradiction.

Now, let $s$ be the smallest positive initial non-jump number of $\SetS$, and let $r$ be the smallest number in $\SetS$ larger than $s$. We cannot have $a<s$ because being a non-jump number and small that $s$ would make $a$ again a non-initial number; and similar to above would yield $\SetS$ with a positive limit point. If $s<a$, then with $h_t=a$ and $k_t=b$ if $t\leq r$, and with $h_t=r$ and $k_t=t$ if $t>r$, the conditions of Lemma \ref{lem:unbninsin} are satisfied and hence, $\US$ has then distinguishing number 2.

Hence, we may assume that $s=a$ and $r$ can be taken for $b$, implying that $r-s\leq p$, and of course $q\geq p$ for all $q \in \SetS$. We then proceed analogously to the proof of Lemma \ref{lem:unbninsin},  and construct a subspace together with  its complement showing that the distinguishing number of $\US$ is 2.

\medskip

\noindent For the sake of this proof, we say that a set of pairs $P$ in $\US \setminus \MS$ is {\em stabilized} by the subspace $\MS$ of $\US$ if for all $(x,y)\in P$:

\begin{enumerate}
\item There exists a point $z\in M$ with $\dist(x,z)\not=\dist(y,z)$.
\item The $s$-distance graph of $\MS$ is a rigid forest.
\end{enumerate}

The proof of the Lemma is an inductive construction on the countable domain of $\US$, and is a consequence of the following claim handling the inductive step.

\begin{claim*}
Let $P$ be a set of pairs of points in $\US \setminus \MS$ which is stabilized by the finite subspace $\MS$ of $\US$. Let $\{x,y\}$ be two points in $\US \setminus \MS$ with $t=\dist(x,y)>0$. We then have that  there exists a finite subspace $\NS$ of $\US$ containing $\MS$ and stabilizing $P'=P\cup \{(x,y)\}$.
\end{claim*}

To prove the claim, if there exists a point $z\in M$ with $\dist(x,z)\not=\dist(y,z)$ let $\NS=\MS$. Hence, we may assume that $\dist(x,z)=\dist(y,z)$ for all $z\in M$.

Let $\CS$ be a metric space with spectrum $\{s,r\}$ whose $s$-distance graph is a rigid tree $S$ which is not isomorphic to one of the trees of the $s$-distance graph of the space $\MS$. Let $e$ be an endpoint of the tree $S$. If $t\leq r$ let $h_t=s$ and $k_t=r$; and if $t>r$ let $h_t=r$ and $k_t=t$. Let $\TS$ be the metric space with $T=\{x,y,e\}$ so that $\dist_{\TS}(x,y)=t$, $\dist_{\TS}(x,e)=h_t$ and $\dist_{\TS}(y,e)=k_t$. Note that $\TS$ is indeed a metric space in all cases. The pair of metric spaces $(\TS,\CS)$ forms an amalgamation instance, and it follows from Lemma~\ref{lem:amalbetter} and from $r\leq s+h_t<s+k_t$ that there exists a metric space $\CS'\in \amalg_\SetS(\TS,\CS)$ with $\dist_{\CS'}(v,x)\geq r$ and with $\dist_{\CS'}(v,y)\geq r$ for all $e\not=v\in C$ and so that $\dist_{\CS'}(v,x)\leq \dist_{\CS'}(v,y)$ for every $v\in C$. Hence, $\dist_{\CS'}(v,y)\geq \dist_{\CS'}(v,x)\geq s$ for all $v\in C$

 Let $\MS'$ be the subspace of $\US$ induced by the set $M\cup \{x,y\}$ of points. The metric spaces $\MS'$ and $\CS'$ form an amalgamation instance.
 Let $\MS''= \amalg^\oplus_\SetS(\MS',C')$. It follows from the definition of $\oplus$ that:
 \[
 \dist_{\MS''}(v,z)\geq\dist_{\CS'}(v,x)\oplus \dist(x,z)\geq s\oplus \dist(x,z)\geq r.
 \]
 Because $\US$ is homogeneous there exists an embedding $f$ of $\MS''$ into $\US$ with $f(a)=a$ for all $a\in M'$, and the image $\NS$ of $\MS''$ under the embedding $f$ is as required for the claim. \end{proof}

We arrive at the first case where $\US$ has infinite distinguishing number.

\begin{lem}\label{lem:nolim}
Let $\SetS$ be a universal spectrum which does not have a limit, then the distinguishing number of $\US$ is 2 or infinite. If $p=\min(\SetS_{>0})$, then the distinguishing number of $\US$ is 2 if and only if there exist numbers $a< b \in \SetS$ with $b-a \leq p$.
\end{lem}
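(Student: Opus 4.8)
The plan is to get one direction directly from Lemma~\ref{lem:boundbel} and the other from a pigeonhole argument in which transpositions turn out to be automorphisms. Write $p=\min(\SetS_{>0})$; this minimum exists and lies in $\SetS$ because $\SetS$ has no limit, so $\SetS_{>0}$ does not accumulate at $0$. If there exist positive elements $a<b$ of $\SetS$ with $b-a\le p=\inf\SetS_{>0}$, then Lemma~\ref{lem:boundbel} gives $\dis(\US)=2$, which is the ``if'' direction. For the ``only if'' direction I argue the contrapositive: assuming that any two distinct positive elements of $\SetS$ differ by strictly more than $p$, I will show $\dis(\US)=\omega$. Combining the two directions also yields the asserted dichotomy $\dis(\US)\in\{2,\omega\}$.

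The crux is the following ``twin'' observation, which uses only the triangle inequality together with the spread-out hypothesis on $\SetS$: if $x,y,z$ are three distinct points of $\US$ with $\dist(x,y)=p$, then $\dist(x,z)=\dist(y,z)$. Indeed, $\dist(x,z)$ and $\dist(y,z)$ are distances between distinct points, hence positive elements of $\SetS$, and by the triangle inequality they differ by at most $\dist(x,y)=p$; since distinct positive elements of $\SetS$ differ by more than $p$, they must be equal. (One could instead invoke Fact~\ref{fact:gap}, after observing that $\mathrm{gap}(s)>p$ for every $s\in\SetS_{>0}$ with $s>p$, and treating the case $\dist(x,z)=p$ separately.) The immediate consequence is that for any two points $x,y\in\US$ with $\dist(x,y)=p$, the transposition $(x\ y)$, fixing every other point of $\US$, preserves all pairwise distances and is therefore a non-identity automorphism of $\US$.

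Now fix $n<\omega$ and an arbitrary colouring $c\colon\US\to n$. The finite metric space consisting of $n+1$ points with all nonzero distances equal to $p$ has spectrum contained in $\SetS$, so by the universality of $\US$ it embeds into $\US$; hence $\US$ contains $n+1$ points that are pairwise at distance $p$. By pigeonhole two of these, say $x$ and $y$, receive the same colour, and then the transposition $(x\ y)$ from the previous paragraph is a non-identity automorphism of $\US$ preserving $c$. Thus no colouring of $\US$ with $n$ colours is distinguishing, and since $n$ was arbitrary, $\dis(\US)=\omega$. This proves the contrapositive, hence the ``only if'' direction; together with the ``if'' direction it shows that $\dis(\US)$ equals $2$ precisely when $\SetS$ contains two positive elements $a<b$ with $b-a\le p$, and equals $\omega$ otherwise.

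I do not expect a genuine obstacle here: the only real idea is the twin observation of the second paragraph. The point to be careful about is that the contrapositive hypothesis really does force $\dist(x,z)$ and $\dist(y,z)$ to coincide, i.e.\ that consecutive positive elements of $\SetS$, being more than $p$ apart, leave no room for two distinct spectrum values inside a triangle having a side of length $p$; and that universality is being invoked only for finite equilateral spaces, which are trivially metric with spectrum contained in $\SetS$.
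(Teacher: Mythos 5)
Your proposal is correct and follows essentially the same route as the paper: the ``if'' direction is delegated to Lemma~\ref{lem:boundbel}, and the ``only if'' direction uses the same twin observation (a side of length $p$ forces equal distances to any third point, since distinct positive spectrum values differ by more than $p$) to make transpositions of same-coloured points at distance $p$ into colour-preserving automorphisms. The paper phrases this via the infinite $\stackrel{p}{\sim}$-equivalence classes rather than embedding an equilateral $(n+1)$-point space, but the underlying argument is identical.
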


\begin{proof}
If there exist number $a<b \in \SetS$ with $b-a \leq p$, then the distinguishing number of $\US$ is 2 again according to Lemma~\ref{lem:boundbel}. This is the case if $p$ is not a jump number, because if $p<q\leq p+p$, then $q-p\leq p $. Thus, we may assume that $p$ is a jump number, and hence, an insular number because being the smallest positive number of $\SetS$ it is an initial number.

Thus, we assume that $p$ is insular and $p<b-a$ for all $a<b \in \SetS$, and we show that the distinguishing number of $\US$ is not finite.
Let $\gamma$ be a colouring function of $\US$ with $n\in \omega$ colours, that is $\gamma[\US]\subseteq n$. Note that because $p$ is a jump number and $p=\min(\SetS_{>0})$, the relation $\sim$ on $\US$ given by $x\sim y$ if and only if $\dist(x,y)=p$ is an equivalence relation. Let $E$ be a $\sim$ equivalence class of $\US$. By our assumption, we have that n $\dist(x,z)=\dist(y,z)$ for all points $z$ of $\US \setminus E$ and all points $x,y\in E$. The set $E$ is infinite because every finite metric space $\MS$ with $\spec(\MS)=\{p\}$ is an element of the age $\mathfrak{A}(S)$. Hence, there are two points $x\not=y$ in $E$ with $\gamma(x)=\gamma(y)$. The function $f$ with $f(z)=z$ for all points $z$ of $\US \setminus \{x,y\}$, and $f(x)=y$ and $f(y)=x$, is then a colour preserving automorphism of $\US$.
\end{proof}

The following is another instance where we can show that the distinguishing number is infinite.

\begin{lem}\label{lem:inflim0}
Let $\SetS$ be a universal spectrum with no positive limits but with 0 as a limit.   If $\mathrm{gap}(\SetS_{\geq s})>0$ for every positive number $s \in \SetS$, then there exists for every $n\in \omega$ and every partition $\boldsymbol{P}=\{A_i: i\in n \}$ of $\US$ a non-trivial automorphism $f$ of $\US$ preserving $\boldsymbol{P}$.
\end{lem}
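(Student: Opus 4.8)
The plan is to reduce to the machinery already in place, namely Lemma~\ref{lem:denssubhom}, which produces a non-trivial (indeed involutive) automorphism preserving any finite partition of $\US$ into \emph{dense} sets. So the real content is to show that under the hypothesis ``$\mathrm{gap}(\SetS_{\geq s})>0$ for every positive $s\in\SetS$'', an arbitrary finite partition $\boldsymbol{P}=\{A_i:i\in n\}$ of $\US$ can be refined (or rather, has at least one cell which we can split off so that) we land in the dense situation. More precisely, I would first observe that if \emph{every} $A_i$ is dense, we are immediately done by Lemma~\ref{lem:denssubhom}. Otherwise some $A_i$ is not dense, meaning there is a jump number $s\in\SetS_{>0}$ and a $\stackrel{s}{\sim}$-equivalence class $E$ with $A_i\cap E=\emptyset$. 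The hypothesis $\mathrm{gap}(\SetS_{\geq s})>0$ (for each positive $s$) is exactly what guarantees, via Lemma~\ref{lem:lim0bl} together with Fact~\ref{fact:gap}, that these equivalence classes are ``locally finite from outside'': all points of $\US\setminus E$ are at the same distance from every point of $E$, because there are no arbitrarily small elements of $\SetS$ above $s$.

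The key step is then the following claim: each $\stackrel{s}{\sim}$-class $E$ is infinite, and more importantly any two points of $E$ that receive the same colour under $\boldsymbol{P}$ can be swapped by an automorphism of $\US$ fixing everything outside $E$. Infinitude of $E$ follows because the subspace induced on $E$ is isomorphic to $\U_{\SetS_{\le s}}$ (as noted in the paragraph preceding Definition~\ref{defin:densesu}), which is a countably infinite homogeneous metric space. Since $\boldsymbol{P}$ uses only $n$ colours, by pigeonhole $E$ contains two distinct points $x,y$ with $\gamma(x)=\gamma(y)$ where $\gamma$ is the colouring associated to $\boldsymbol{P}$. Because $\mathrm{gap}(\SetS_{\geq s})>0$, every point $z$ outside $E$ satisfies $\dist(x,z)=\dist(y,z)$ (Fact~\ref{fact:gap}, using that $\dist(x,y)\le s <\mathrm{gap}(\dist(x,z))$ whenever $\dist(x,z)>s$), so the transposition $(x\,y)$ extended by the identity is a distance-preserving bijection of $\US$, i.e.\ a non-trivial automorphism, and it preserves $\boldsymbol{P}$ since $x,y$ have the same colour. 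This handles the case where some cell is not dense, and together with the dense case above it covers all partitions.

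I expect the main obstacle to be the bookkeeping around the two sub-cases and making sure the hypothesis is being used in the right place: the dense case genuinely requires the full strength of Lemma~\ref{lem:denssubhom} (and hence that $0$ is a limit of $\SetS$), whereas the non-dense case only needs the gap hypothesis to make the local transposition work, and one must check these two regimes are exhaustive. A secondary subtlety is verifying that ``$\mathrm{gap}(\SetS_{\ge s})>0$ for all positive $s$'' is consistent with ``$0$ is a limit of $\SetS$'' and ``no positive limits'' — this is fine, since the gaps can shrink to $0$ only as $s\to 0$, and it is precisely this configuration (e.g.\ $\SetS=\{0\}\cup\{2^{-k}:k\in\omega\}$ type spectra, suitably closed under the 4-values condition) for which the lemma asserts infinite distinguishing number. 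One should double-check, when invoking Fact~\ref{fact:gap}, that the relevant point $z$ with $\dist(x,z)>s$ actually has $\mathrm{gap}(\dist(x,z))>s\ge\dist(x,y)$; this is immediate from $\mathrm{gap}(\SetS_{\ge s})>0$ only after noting $\dist(x,z)\in\SetS_{>s}$, and one wants the strict inequality $\dist(x,y)\le s<\mathrm{gap}(\dist(x,z))$, which holds because $\dist(x,z)$ and its nearest neighbour in $\SetS$ both lie in $\SetS_{\ge s^+}$ where $s^+$ is the successor of $s$, so their gap is at least $\mathrm{gap}(\SetS_{\ge s^+})>0$; if necessary one replaces $s$ by a slightly larger jump number to absorb the boundary case.
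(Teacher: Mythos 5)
Your first case (every $A_i$ dense in $\US$) is fine and matches the intended use of Lemma~\ref{lem:denssubhom}. The gap is in your second case: the transposition $(x\,y)$ extended by the identity is in general \emph{not} an automorphism of $\US$. You only check distances to points $z$ outside $E$, but you must also check points $z\in E\setminus\{x,y\}$, and these will typically distinguish $x$ from $y$: the class $E$ carries the metric structure of $\U_{\SetS_{\leq s}}$, and since $0$ is a limit of $\SetS$ the spectrum $\SetS_{\leq s}$ is infinite, so for two arbitrary points $x\neq y$ of $E$ there are points $z\in E$ with $\dist(x,z)\neq\dist(y,z)$. A colour coincidence alone does not make two points interchangeable (contrast this with Lemma~\ref{lem:nolim}, where the class has spectrum $\{0,p\}$ with $p$ insular and the swap genuinely works). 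Moreover, even your verification for $z\notin E$ is not justified: the hypothesis gives $\mathrm{gap}(\SetS_{\geq s})>0$, not $\mathrm{gap}(\SetS_{\geq s})>s\geq\dist(x,y)$, so the inequality $\dist(x,y)<\mathrm{gap}(\dist(x,z))$ needed for Fact~\ref{fact:gap} can fail; your closing patch via $\SetS_{\geq s^{+}}$ again only yields positivity of the gap, not the comparison with $\dist(x,y)$.

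The repair requires two ideas you are missing. First, instead of swapping two points when some cell fails to be dense, one descends: if $A_i$ meets a class $E$ but is not dense in $E$, there is a smaller jump number and a subclass $E'\subseteq E$ with $A_i\cap E'=\emptyset$ (arbitrarily small jump numbers exist by Lemma~\ref{lem:lim0bl}); since there are only $n$ cells, finitely many descents produce a jump number $s$ and a class $E$ on which every cell is either empty or dense \emph{as a subset of the homogeneous space $E$}. Second, one applies Lemma~\ref{lem:denssubhom} inside $E$ with displacement $r\in\SetS$ chosen so that $0<r<\mathrm{gap}(\SetS_{\geq s})$ (possible because $0$ is a limit of $\SetS$); then every point of $E$ moves by exactly $r$, which \emph{is} small enough for Fact~\ref{fact:gap} to guarantee that all distances to points outside $E$ are preserved, so the automorphism of $E$ extends by the identity to a non-trivial partition-preserving automorphism of $\US$. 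This is where the hypothesis $\mathrm{gap}(\SetS_{\geq s})>0$ is actually used, and it is the step your transposition was trying, unsuccessfully, to shortcut.
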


\begin{proof}
There exists a subset $I\subseteq n$ and a jump number $s$ of $\SetS$ and an $\stackrel{s}{\sim}$ equivalence class $E$ so that for all $i\in I$ the set $A_i$ is dense for the homogeneous metric space $\mathrm{E}$, and so that $A_i\cap E=\emptyset$ for all $i\not\in I$.

Let $r \in \SetS$ with $0<r<\mathrm{gap}(\SetS_{\geq s})$. According to Lemma \ref{lem:denssubhom}, there exists an automorphism $f'$ of $\mathrm{E}$ with $\dist(x,f'(x))=r$ for all $x\in E$ which preserves the partition of $E$ induced by the partition $\boldsymbol{P}$ of $\US$. It follows from Fact \ref{fact:gap} that $\dist(y,x)=\dist(y,f'(x))$ for all points $y$ in $\US \setminus E$ and all points $x\in E$. It follows that the function $f: \US \to \US$ with $f(x)=f'(x)$ if $x\in E$ and with $f(y)=y$ if $y\not\in E$ is a non-trivial automorphism of $\US$ preserving $\boldsymbol{P}$.
\end{proof}

We can then characterize the case of a universal spectrum with no positive limits but with 0 as a limit.

\begin{lem}\label{lem:infd0}
Let $\SetS$ be a universal spectrum with no positive limits but with 0 as a limit, then the distinguishing number of $\US$ is 2 or infinite. The distinguishing number of $\US$ is infinite if and only if $\mathrm{gap}(\SetS_{\geq s})>0$ for every positive number $s\in \SetS$.
\end{lem}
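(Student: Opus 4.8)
The plan is to get everything out of the two tools already in place: Lemma~\ref{lem:inflim0} supplies one direction, Lemma~\ref{lem:unbninsin} the other, so the whole statement reduces to a case split plus one technical observation about gaps. Note first that $\US$ is infinite (it embeds arbitrarily large equilateral metric spaces, whose spectrum lies in $\SetS$) and homogeneous, hence its automorphism group is transitive on points and non-trivial, so $\dis(\US)\geq 2$ always. It therefore suffices to show that $\dis(\US)=\omega$ when $\mathrm{gap}(\SetS_{\geq s})>0$ for every positive $s\in\SetS$, and $\dis(\US)=2$ otherwise.

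In the first case, Lemma~\ref{lem:inflim0} produces, for each $n\in\omega$ and each partition of $\US$ into $n$ classes, a non-trivial automorphism of $\US$ preserving it. Hence no finite partition is distinguishing, and since $\US$ is countable the partition into singletons is; thus $\dis(\US)=\omega$.

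Now suppose there is a positive $s_0\in\SetS$ with $\mathrm{gap}(\SetS_{\geq s_0})=0$; I would verify the hypotheses of Lemma~\ref{lem:unbninsin}. The key observation, and the only step requiring a genuine argument, is that for every $\varepsilon>0$ there are \emph{arbitrarily large} $w\in\SetS$ with $\mathrm{gap}(w)<\varepsilon$: if $\{w\in\SetS: w\geq s_0,\ \mathrm{gap}(w)<\varepsilon\}$ were finite for some $\varepsilon$, then its minimum gap $\delta>0$, together with $\mathrm{gap}(w)\geq\varepsilon$ for the remaining $w\geq s_0$, would force $\mathrm{gap}(\SetS_{\geq s_0})\geq\min(\delta,\varepsilon)>0$, a contradiction; and this set cannot be infinite and bounded, since a bounded infinite subset of $\SetS$ would have a positive accumulation point, against $\SetS$ having no positive limit. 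Granting this, fix $\varepsilon<s_0/2$ and take $w\geq s_0$ with $\mathrm{gap}(w)<\varepsilon$ and a neighbour $w'\in\SetS$ with $|w-w'|<\varepsilon$: if $w'>w$ then $w<w'<2w$, so $w$ is a positive non-jump number; if $w'<w$ then the predecessor $w^-$ of $w$ (which exists because, lacking positive limits, $\SetS$ is inversely well ordered) satisfies $w^->s_0/2>0$ and $w<2w^-$, so $w^-$ is a positive non-jump number. Either way we obtain a positive non-jump number $\sigma$, which is item~(1) of Lemma~\ref{lem:unbninsin}.

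For item~(2), given $t\in\SetS_{>0}$ use the key observation to pick $w\in\SetS$ with $\mathrm{gap}(w)<t$ and $w>\sigma+2t$, and a neighbour $w'\in\SetS$ with $|w-w'|<t$; then $\min\{w,w'\}>\sigma+t>t$, so with $h_t=\min\{w,w'\}$ and $k_t=\max\{w,w'\}$ we get $\sigma<h_t<k_t$, $k_t-h_t<t$, and $h_t+k_t>t$, which is item~(2). Hence Lemma~\ref{lem:unbninsin} applies with $s=\sigma$ and $\dis(\US)=2$. Combining the two cases gives $\dis(\US)\in\{2,\omega\}$, with $\dis(\US)=\omega$ precisely when $\mathrm{gap}(\SetS_{\geq s})>0$ for every positive $s\in\SetS$; the main obstacle is the "arbitrarily large small-gap elements" observation, after which the rest is routine bookkeeping feeding Lemmas~\ref{lem:inflim0} and~\ref{lem:unbninsin}.
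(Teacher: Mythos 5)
Your proposal is correct and follows essentially the same route as the paper: Lemma~\ref{lem:inflim0} disposes of the case where every $\mathrm{gap}(\SetS_{\geq s})$ is positive, and Lemma~\ref{lem:unbninsin} handles the other case. If anything, your verification of the hypotheses of Lemma~\ref{lem:unbninsin} --- via the observation that elements of small gap must occur at arbitrarily large scales, since a bounded infinite set of them would create a positive limit --- is more explicit than the paper's brief appeal to the smallest non-insular initial number, which leaves condition~(2) of that lemma essentially unchecked.
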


\begin{proof}
On account of Lemma \ref{lem:inflim0} it remains to prove that if there exists a positive number $s\in \SetS$ for which $\mathrm{gap}(\SetS_{\geq s})=0$, then the distinguishing number of $\US$ is equal to 2.

But if every number in $\mathrm{gap}(\SetS_{\geq s})$ is insular, then $\mathrm{gap}(\SetS_{\geq s})\geq s>0$. Let $r$ be the smallest non-insular initial number larger than or equal to $s$. Then $\mathrm{gap}(\SetS_{\geq r})=0$ and in turn then $\mathrm{gap}(\SetS_{\geq r^+})=0$. By Lemma \ref{lem:unbninsin}, the distinguishing number of $\US$ is 2.
\end{proof}

This completes all the results required to prove the Main Theorem characterizing the distinguishing number of universal homogeneous Urysohn metric spaces.

\section{Conclusion}

We have shown that the distinguishing number of every universal homogeneous Urysohn metric spaces is either 2 or infinite, and moreover characterized when each case occurs by structural properties of the corresponding universal spectrum. It is interesting that this is the case even though the permutation group of these Urysohn metric spaces is often imprimitive. This is for example the case when the spectrum  $\SetS$ contains a jump number  $s>0$, then the relation $\stackrel{s}{\sim}$ given by $x\stackrel{s}{\sim}y$ if $\dist(x,y)\leq s$ is an equivalence relation on $\US$. Hence if this is a non-trivial relation, for example if $\SetS$ contains an element larger than $s$, then the automorphism group of $\US$ is imprimitive. We thus propose an open problem suggested by the referee. 

\begin{problem*}
Characterize universal spectrum $\SetS$ such that the corresponding universal homogeneous Urysohn metric space $\US$ has a primitive automorphism group.
\end{problem*}

In these cases of imprimitivity, it is due to the homogeneity and universality that the distinguishing number passes directly from 2 to infinity. However, one cannot expect the distinguishing number of every metric space to always be either 2 or infinite, even for homogeneous metric spaces. This is the case of the pentagon $C_5$ equipped with the graph distance, making it into an homogeneous metric space with primitive automorphism group $D_{10}$ and distinguishing number 3. One can also produce an infinite homogeneous metric space of distinguishing number 3, but with imprimitive automorphism group. Indeed consider the Rado (homogeneous) graph, first turn it into a metric space $\RS$ with spectrum $\{0,3,5\}$ by assigning distance 5 to every edge, distance 3 to non-edges, and then consider the wreath product $\RS[\MS]$ for $\MS$ the metric space consisting of two points of distance 1. This creates an homogeneous metric space $\RS[\MS]$ with spectrum $\{0,1,3,5\}$. Since the Rado graph has distinguishing number 2, we must use 3 colours to obtain two different sets of two different colours to assign to elements of $\MS$. Finally, the automorphism group of $\RS[\MS]$ is imprimitive because points of distance 1 form a non-trivial equivalence relation.

\section*{Acknowledgments}
 The contributions of two excellent referees are very much appreciated and gratefully acknowledged.

%\vfill\eject

 \end{document}